\newtheorem{theorem}{Theorem}[section]
\newtheorem{lemma}[theorem]{Lemma}
\theoremstyle{definition}
\newtheorem{corollary}[theorem]{Corollary}
\newtheorem{proposition}[theorem]{Proposition}
\newtheorem{conjecture}[theorem]{Conjecture}
\theoremstyle{remark}
\newtheorem{remark}[theorem]{Remark}
\numberwithin{equation}{section}
\begin{document}

\title[Compactness of embedded free boundary minimal surfaces]{Compactness of the space of embedded minimal surfaces with free boundary in three-manifolds with nonnegative Ricci curvature and convex boundary}

\author[Ailana Fraser]{Ailana Fraser}
\address{Mathematics Department, University of British Columbia, 1984 Mathematics Road, Vancouver, BC V6T 1Z2, Canada}
\email{afraser@math.ubc.ca}

\author[Martin Li]{Martin Man-chun Li}
\address{Mathematics Department, University of British Columbia, 1984 Mathematics Road, Vancouver, BC V6T 1Z2, Canada}
\email{martinli@math.ubc.ca}

\thanks{This work was partially supported by NSERC}

\begin{abstract}
We prove a lower bound for the first Steklov eigenvalue of embedded minimal hypersurfaces with free boundary in a compact $n$-dimensional Riemannian manifold which has nonnegative Ricci curvature and strictly convex boundary. When $n=3$, this implies an apriori curvature estimate for these minimal surfaces in terms of the geometry of the ambient manifold and the topology of the minimal surface. An important consequence of the estimate is a smooth compactness theorem for embedded minimal surfaces with free boundary when the topological type of these minimal surfaces is fixed. 
\end{abstract}

\maketitle


\section{Introduction}

In a series of recent papers \cite{Fraser-Schoen12}, \cite{Fraser-Schoen13} and \cite{Fraser-Schoen11}, Fraser and Schoen studied an extremal problem for the first Steklov eigenvalue on compact surfaces with boundary and proved that minimal surfaces in Euclidean balls with free boundary on the boundary of the ball realize the extrema.  The equatorial disk \cite{Weinstock54} and the critical catenoid \cite{Fraser-Schoen12} \cite{Fraser-Schoen13} \cite{Fraser-Schoen11} uniquely maximize $\sigma_1(\Sigma)L(\partial \Sigma)$ among metrics on the disk and annulus respectively. In the recent preprint \cite{Fraser-Schoen12}, they were able to prove existence of extremal metrics for genus zero orientable surfaces with any number of boundary components, and the extrema are achieved by properly embedded minimal surfaces in the unit ball $B^3$ in $\mathbb{R}^3$ with free boundary. The non-orientable case of a M\"{o}bius band was also studied in detail. This motivates the question of finding more examples of properly embedded minimal surfaces in the unit ball. In the case without boundary, Lawson \cite{Lawson70a} proved that a closed orientable surface of any genus can be realized as an embedded minimal surface in the standard round sphere $S^3$ (while any non-orientable closed surface except $\mathbb{R}P^2$ can be realized as a minimal immersion into $S^3$). A central question in this direction is the following:

\vspace{.5cm}

\textbf{Question 1}: Which compact orientable surfaces with boundary can be realized as properly embedded minimal surfaces in the unit ball $B^3$ with free boundary? 

\vspace{.5cm}

Since $B^3$ is simply connected, any properly embedded surface in $B^3$ must be orientable. By the results of Fraser and Schoen \cite{Fraser-Schoen12} \cite{Fraser-Schoen13}, we know any genus zero orientable surface can be minimally embedded into $B^3$ as a free boundary solution. Therefore, it remains to look for surfaces of higher genus. Another related question one could ask is the following:

\vspace{.5cm}

\textbf{Question 2}: Given a compact orientable surface with boundary, in how many ways can it be realized as a properly embedded minimal surface in the unit ball $B^3$ with free boundary?

\vspace{.5cm}

Note that any such minimal surface comes in a continuous family because of the isometry group of the unit ball (which is a compact group). Therefore, we should look at minimal embeddings up to congruences. Using the holomorphicity of Hopf differential, Nitsche \cite{Nitsche85} proved that the equatorial totally geodesic disk is the only immersed minimal disk in $B^3$ with free boundary (up to congruences). For the annulus, we have the following conjecture:

\begin{conjecture}
The critical catenoid is the unique properly embedded minimal annulus in $B^3$ with free boundary up to congruences.
\end{conjecture}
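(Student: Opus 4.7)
The plan is to combine the Steklov eigenvalue lower bound of the present paper with the rigidity portion of Fraser--Schoen's theorem \cite{Fraser-Schoen11} that the critical catenoid uniquely maximizes $\sigma_1\cdot L(\partial\Sigma)$ among all smooth metrics on the annulus. First, I would observe that for any free boundary minimal immersion $X\colon\Sigma\to B^3$, each coordinate function $x^i$ satisfies $\Delta_\Sigma x^i=0$ on $\Sigma$ (by minimality) and $\partial x^i/\partial\nu=x^i$ on $\partial\Sigma$ (by the free boundary condition). Hence each $x^i$ is a Steklov eigenfunction with eigenvalue $1$, so $1$ lies in the Steklov spectrum and $\sigma_k(\Sigma)=1$ for some $k\geq 1$. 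If the lower bound announced in the present paper, applied to $B^3$ (which has nonnegative Ricci curvature and strictly convex boundary), yields $\sigma_1(\Sigma)\geq 1$, then combining with the above forces $\sigma_1(\Sigma)=1$ exactly, realized by the coordinate functions.

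Second, to pass from $\sigma_1(\Sigma)=1$ to the critical catenoid itself, I would run a Hopf-differential argument to pin down the conformal structure. The holomorphic quadratic differential $\Phi=\langle X_{zz},X_{zz}\rangle\,dz^2$ is real on $\partial\Sigma$ because the free boundary condition implies that the boundary is a line of curvature. Uniformizing $\Sigma$ as a flat cylinder $[-T,T]\times S^1$ with coordinate $w=t+i\theta$, the holomorphic quadratic differentials real on both boundary circles are precisely $\Phi=c\,dw^2$ with $c\in\mathbb{R}$; the case $c=0$ forces total umbilicity hence a totally geodesic disk, contradicting the annular topology, so $c\neq 0$. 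Via Weierstrass, this reduces the problem to a meromorphic Gauss map $g$ with $g'\omega=(c/2)\,dw$, subject to the boundary conditions $|X|=1$ and $N\cdot X=0$ at $t=\pm T$. The catenoid corresponds to $g(w)=e^w$ with $T$ the unique value closing up the free boundary condition; combined with $\sigma_1(\Sigma)=1$ and the Fraser--Schoen rigidity at the maximizer, $\Sigma$ is forced to be congruent to the critical catenoid.

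The principal obstacle is that this conjecture is widely regarded as open, and both routes reduce to an overdetermined nonlinear boundary-value problem whose unique solvability is exactly the conjecture. Candidate Gauss maps $g(w)=e^{kw}$ with $k\geq 2$ yield multiply covered catenoids which fail embeddedness, but excluding all nontrivial embedded perturbations of these candidates requires a rigidity mechanism strictly stronger than what the Hopf differential, the Steklov characterization, or their naive combination have so far delivered. The deepest step in any successful proof would be either to sharpen the Steklov lower bound to exclude $\sigma_1>1$ in a quantitative fashion compatible with the coordinate-function identity, or to produce a reflection/symmetrization argument forcing $SO(2)$ symmetry; once rotational symmetry is established, the classification of equivariant free boundary minimal annuli in $B^3$ immediately isolates the critical catenoid among a one-parameter family.
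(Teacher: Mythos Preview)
The statement you are attempting to prove is presented in the paper as a \emph{conjecture}, not a theorem; the paper offers no proof and explicitly frames it as the free-boundary analogue of Lawson's conjecture. So there is no ``paper's own proof'' to compare against, and your task was ill-posed from the start.

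That said, your proposed strategy has a concrete gap even on its own terms. The Steklov lower bound actually established in the paper (Theorem~3.1 and Corollary~3.2) is $\sigma_1(\Sigma)\geq k/2$, which in the unit ball gives only $\sigma_1(\Sigma)\geq 1/2$. Your argument requires $\sigma_1(\Sigma)\geq 1$ in order to force equality with the coordinate-function eigenvalue; but the assertion $\sigma_1(\Sigma)=1$ is itself stated in the paper as an open problem (Conjecture~3.3), not a consequence of the eigenvalue estimate. The factor of $2$ lost in the Reilly-type argument is precisely what prevents the route you outline from closing. You acknowledge this obstacle in your final paragraph, so what you have written is more accurately a discussion of why the conjecture is hard than a proof proposal.

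Your Hopf-differential reduction is standard and correct as far as it goes: on a free-boundary minimal annulus the Hopf differential is a nonzero real multiple of $dw^2$ in cylindrical coordinates. But, as you note, this by itself does not exclude embedded annuli beyond the critical catenoid; the remaining rigidity step (whether via a sharpened eigenvalue bound, a symmetrization, or something else) is exactly the content of the conjecture and is not supplied by anything in the paper.
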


One should compare this conjecture with the longstanding conjecture of Lawson \cite{Lawson70}, which asserts that the Clifford torus is the unique embedded minimal torus in $S^3$ up to congruences. Various partial results were obtained in this direction with additional assumptions (\cite{Montiel-Ros85} \cite{Ros95} \cite{Urbano90}). Very recently, Lawson's conjecture was proved in full generality by Brendle \cite{Brendle12} (see \cite{Brendle13} for a nice survey of this conjecture). 

In this paper, we prove that the space of properly embedded minimal surfaces in $B^3$ with free boundary is compact in the $C^\infty$ topology, if we fixed the topological type of the surface. In fact, we prove that the compactness result holds in any compact $3$-manifold $M^3$ with nonnegative Ricci curvature and strictly convex boundary $\partial M$. This result is similar to the classical compactness of minimal surfaces in closed manifolds with positive Ricci curvature of Choi and Schoen \cite{Choi-Schoen85}. Note that proper embeddedness is an essential assumption in our theorem.

\begin{theorem}
Let $M^3$ be a compact $3$-dimensional Riemannian manifold with nonempty boundary $\partial M$. Suppose $M$ has nonnegative Ricci curvature and the boundary $\partial M$ is strictly convex with respect to the inward unit normal. Then the space of compact properly embedded minimal surfaces of fixed topological type in $M$ with free boundary on $\partial M$ is compact in the $C^k$ topology for any $k \geq 2$.
\end{theorem}

The key ingredient in the proof is a lower bound on the first Steklov eigenvalue for properly embedded minimal surfaces with free boundary in terms of the boundary convexity of $\partial M$. Combining with an equality from \cite{Fraser-Schoen11}, this gives an apriori upper bound on the length of the boundary in terms of the topology of the minimal surface and the boundary convexity of $\partial M$. By an isoperimetric inequality of White \cite{White09}, we get an upper bound on the area of the minimal surface as well. These together give an apriori $L^2$ bound on the norm of the second fundamental form of the minimal surface. By a removable singularity theorem and curvature estimates similar to the ones in \cite{Choi-Schoen85}, we obtain the smooth compactness theorem above.

The outline of this paper is as follows. In section 2, we prove some general facts about minimal hypersurfaces with free boundary in a Riemannian $n$-manifold with nonnegative Ricci curvature and convex boundary. The key results are the isoperimetric inequality (Lemma 2.2) and the connectedness principle (Corollary 2.5). When $n=3$, we prove that any such manifold $M^3$ is diffeomorphic to the unit ball $B^3$ (Theorem 2.11). In section 3, we prove a lower bound for the first Steklov eigenvalue of a properly embedded minimal hypersurface in terms of the boundary convexity of the ambient manifold. Then, we specialize to dimension three and prove a removable singularity theorem and curvature estimates at the free boundary in sections 4 and 5. In section 6, we give a proof of our main compactness theorem (Theorem 1.2).

\textit{Acknowledgements}. The authors would like to thank Professor Richard Schoen for many useful discussions and his interest in this work. They would also like to express their gratitude to the anonymous referee for all the useful comments.


\section{Minimal hypersurfaces with free boundary}

Let $M^n$ be a compact $n$-dimensional Riemannian manifold with nonempty boundary $\partial M$. Let $\langle \cdot , \cdot \rangle$ be the metric on $M$ and $D$ be the Riemannian connection on $M$. The second fundamental form $h^{\partial M}$ of $\partial M$, with respect to the inner unit normal $n$, is given by  $h^{\partial M}(u,v)=\langle D_u v, n \rangle$, where $u,v$ are tangent to $\partial M$. The mean curvature $H^{\partial M}$ of $\partial M$ is then defined as the trace of $h^{\partial M}$; i.e., $H^{\partial M}=\sum_{i=1}^{n-1} h^{\partial M}(e_i,e_i)$ where $e_1,\ldots,e_{n-1}$ is any orthonormal basis for the tangent bundle $T\partial M$. All manifolds are assumed to be smooth up to the boundary unless otherwise stated.

Let $\varphi: \Sigma \to M$ be a compact hypersurface (possibly with boundary) properly immersed in $M$; that is, $\varphi(\partial \Sigma)=\varphi(\Sigma) \cap \partial M$. We say that $\Sigma$ is a minimal hypersurface with free boundary if $\Sigma$ is a minimal hypersurface (i.e., the mean curvature vanishes) and $\Sigma$ meets $\partial M$ orthogonally along $\partial \Sigma$. If $\varphi$ is an embedding, we treat $\Sigma \subset M$ as a submanifold of $M$ and take $\varphi$ to be the inclusion map $\Sigma \hookrightarrow M$. Suppose $\Sigma$ is two-sided; that is, there exists a globally defined unit normal vector field $N$ on $\Sigma$. Any normal vector field on $\Sigma$ has the form $X=fN$ for some $f \in C^\infty(\Sigma)$ and the second variation (see \cite{Schoen06} for example) of the volume functional with respect to $X=fN$ is 
\begin{align}
\delta^2 \Sigma (f)=&\int_\Sigma \left[ \|\nabla^\Sigma f\|^2- (\text{Ric}^M(N,N)+\|h^\Sigma\|^2)f^2 \right] d\text{Vol}_\Sigma \\
&- \int_{\partial \Sigma} h^{\partial M}(N,N) f^2 d \text{Vol}_{\partial \Sigma}, \notag
\end{align}
where $\nabla^\Sigma$ is the gradient operator on $\Sigma$, $\text{Ric}^M$ is the Ricci curvature of $M$, and $h^\Sigma$ is the second fundamental form of $\Sigma$ with respect to the unit normal $N$. Here, $d\text{Vol}_\Sigma$ and $d \text{Vol}_{\partial \Sigma}$ are the volume forms on $M$ and $\partial M$ respectively. Note that $N$ is tangent to $\partial M$ along $\partial \Sigma$ since $\Sigma$ meets $\partial M$ orthogonally along $\partial \Sigma$. We say that $\Sigma$ is stable if $\delta^2 \Sigma(f) \geq 0$ for any smooth function $f$ on $\Sigma$. Otherwise, $\Sigma$ is unstable. The following lemma is an immediate consequence of formula (2.1).

\begin{lemma}
Let $M^n$ be an $n$-dimensional compact Riemannian manifold with nonempty boundary $\partial M$. Suppose $M$ has nonnegative Ricci curvature and the boundary $\partial M$ is strictly convex with respect to the inward unit normal; i.e., there exists a constant $k>0$ such that $h^{\partial M}(u,u) \geq k>0$ for any unit vector $u$ tangent to $\partial M$. 

Then, any two-sided, properly immersed, smooth minimal hypersurface $\Sigma^{n-1}$ with nonempty free boundary $\partial \Sigma$ must be unstable. Moreover, if $M$ is orientable, then the $(n-1)$-th relative integral homology group $H_{n-1}(M,\partial M)$ vanishes.
\end{lemma}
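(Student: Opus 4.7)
The plan is to test the second variation formula (2.1) with the constant function $f\equiv 1$. The gradient term vanishes, the bulk integrand $\text{Ric}^{M}(N,N)+\|h^{\Sigma}\|^{2}$ is nonnegative by the Ricci hypothesis (so the interior integral contributes at most $0$), and along $\partial\Sigma$ the orthogonality condition forces $N$ to be a unit vector tangent to $\partial M$, whence strict convexity gives $h^{\partial M}(N,N)\geq k>0$ pointwise there. Because $\partial\Sigma$ is nonempty, the boundary term is at most $-k\cdot\text{Vol}_{n-2}(\partial\Sigma)<0$, and hence $\delta^{2}\Sigma(1)<0$, so $\Sigma$ is unstable. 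The first part is just this sign check in (2.1).

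For the vanishing of $H_{n-1}(M,\partial M)$ when $n\leq 8$, I would proceed by contradiction using geometric measure theory. Assuming a nontrivial relative class, I would minimize mass among integral $(n-1)$-currents in the class with boundary supported in $\partial M$. Federer--Fleming compactness, together with interior regularity of codimension-one area minimizers (valid as long as $n-1\leq 7$) and free boundary regularity at the smooth strictly convex $\partial M$ (in the spirit of Gr\"uter--Jost), produces a smooth, embedded, two-sided, stable minimal hypersurface $\Sigma$ realizing the class and meeting $\partial M$ orthogonally along its boundary. As soon as $\partial\Sigma\neq\emptyset$, the first part of the lemma contradicts stability of $\Sigma$.

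The main obstacle is to exclude the possibility that the minimizer is a closed hypersurface lying entirely in the interior of $M$. My plan is to re-test the second variation with $f\equiv 1$ on such a closed $\Sigma$: with no boundary term, stability forces $\text{Ric}^{M}(N,N)\equiv 0$ and $h^{\Sigma}\equiv 0$, so $\Sigma$ is totally geodesic with vanishing normal Ricci. The Jacobi equation then shows that the normal geodesic flow from $\Sigma$ generates a one-parameter family of parallel totally geodesic hypersurfaces, which by compactness of $M$ must first touch $\partial M$ at some time $T>0$. At the contact point, the tangential second fundamental form comparison between the leaf $\Sigma_{T}$ (vanishing second fundamental form) and $\partial M$ (second fundamental form bounded below by $k>0$) is incompatible with strict convexity once the normals are aligned, giving a contradiction. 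Hence $\partial\Sigma\neq\emptyset$ and the minimization argument concludes.
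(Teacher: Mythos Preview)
Your proof of instability with $f\equiv 1$ and your GMT minimization strategy for $H_{n-1}(M,\partial M)=0$ are exactly the paper's approach. The only divergence is in how you rule out a \emph{closed} minimizer, and there your argument has a genuine gap.

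From stability of a closed $\Sigma$ you correctly deduce $h^{\Sigma}\equiv 0$ and $\text{Ric}^{M}(N,N)\equiv 0$ along $\Sigma$. But $\text{Ric}^{M}(N,N)=0$ is only a trace condition: it says $\mathrm{tr}\,R_N=0$ for $R_N=R(\cdot,N)N$, not $R_N=0$. The Riccati equation for the shape operator of the equidistant hypersurfaces, $A_t'=-A_t^{2}-R_N$, then gives $A_t=-tR_N|_{\Sigma}+O(t^{2})$, which need not vanish. So the leaves $\Sigma_t$ are in general \emph{not} totally geodesic, and your comparison of second fundamental forms at the first contact with $\partial M$ (``zero versus at least $k$'') does not go through. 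One can try to salvage this by tracking only the mean curvature (which does satisfy $H_t\le 0$ via $H_t'=-|A_t|^2-\text{Ric}(N,N)\le 0$), but then one must also confront smoothness of $\Sigma_T$ at the contact point, which could lie on the cut or focal locus of $\Sigma$.

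The paper avoids all of this by invoking (what becomes) Lemma~2.2: take a minimizing geodesic $\gamma$ from the closed minimal $\Sigma$ to $\partial M$; it meets both orthogonally at its endpoints. Summing the second variation of length over a parallel orthonormal normal frame $V_1,\ldots,V_{n-1}$ along $\gamma$ gives
\[
\sum_{i=1}^{n-1}\delta^{2}\gamma(V_i,V_i)=-\int_0^{\ell}\text{Ric}^{M}(\gamma',\gamma')\,ds-H^{\Sigma}-H^{\partial M}<0,
\]
using $\text{Ric}^{M}\ge 0$, $H^{\Sigma}=0$, and $H^{\partial M}>0$. This contradicts the minimality of $\gamma$, so no closed minimal hypersurface exists. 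Note this only needs strict \emph{mean} convexity of $\partial M$ and sidesteps any regularity issues with equidistant hypersurfaces.
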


\begin{proof}
Taking $f \equiv 1$ in (2.1), the curvature assumptions imply that $\delta^2 \Sigma(1) \leq -k \text{Vol}(\partial \Sigma) <0$ (since $\partial \Sigma \neq \emptyset$). Therefore, $\Sigma$ is unstable. To prove the second assertion, suppose $H_{n-1}(M,\partial M) \neq 0$. Let $\alpha \neq 0$ be a nontrivial primitive class in $H_{n-1}(M,\partial M)$. We claim that one can choose a compact embedded minimal hypersurface $\Sigma$ with free boundary (which may be empty) such that $\alpha=[\Sigma]$ in $H_{n-1}(M,\partial M)$ and $\Sigma$ minimizes volume among all hypersurfaces homologous to $\Sigma$ relative to $\partial M$ (see, for example, Corollary 9.9 in \cite{Federer-Fleming60}). This can be seen as follows. By Poincare-Lefschetz duality and noting $K(\mathbb{Z},1)=S^1$, we have 
\begin{equation*}
H_{n-1}(M,\partial M) \cong H^1(M) \cong \langle M, S^1 \rangle.
\end{equation*}
So $\alpha$ corresponds to a map $f:M \to S^1$, which we can assume to be smooth by the Whitney approximation theorem. Then, for any regular value $z \in S^1$ of $f$, the preimage $f^{-1}(z)$ is a properly embedded compact orientable hypersurface which represents $\alpha$ in $H_{n-1}(M,\partial M)$. Since $\Sigma$ is stable, the curvature estimates for stable minimal hypersurfaces ( \cite{Gruter-Jost86a} \cite{Schoen-Simon-Yau75} \cite{Schoen-Simon81}) imply that $\Sigma$ is smooth up to the boundary (which may be empty), except possibly along a singular set $\mathcal{S}$ of Hausdorff dimension at most $n-8$. Assume first the singular set $\mathcal{S}$ is empty. If $\partial \Sigma \neq \emptyset$, then we have a contradiction with the above statement since $\Sigma$ is stable and two-sided. If $\partial \Sigma = \emptyset$, then it follows from Lemma 2.2 below that this is impossible. In case $\mathcal{S}$ is nonempty, a cutoff argument near the singular set gives the same conclusion since $S$ has codimension greater than $3$.
\end{proof}

The next lemma is an isoperimetric inequality for minimal hypersurfaces in $M$, which holds under slightly weaker curvature assumptions than those in Lemma 2.1.

\begin{lemma}[Isoperimetric inequality]
Let $M^n$ be a compact $n$-dimensional Riemannian manifold with nonempty boundary $\partial M$. Suppose $M$ has nonnegative Ricci curvature and the boundary $\partial M$ is strictly mean convex with respect to the inward unit normal.

Then, $M$ contains no smooth, closed, embedded minimal hypersurface. Furthermore, if $n \leq 7$, then there exists a constant $c>0$, depending only on $M$, such that 
\begin{equation}
\text{Vol }(\Sigma) \leq c \text{Vol }(\partial \Sigma)
\end{equation}
for any smooth immersed minimal hypersurface $\Sigma$ in $M$. 
\end{lemma}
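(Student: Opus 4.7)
My plan is to prove the two assertions in sequence: first rule out closed embedded minimal hypersurfaces via a Frankel-type second-variation argument, then establish the linear isoperimetric inequality by a compactness-and-contradiction argument using a maximum principle for stationary varifolds.

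For the first assertion, suppose $\Sigma\subset M$ is a smooth closed embedded minimal hypersurface. The strong maximum principle (applied to minimal $\Sigma$ and strictly mean convex $\partial M$) rules out tangential contact, so $\Sigma$ lies in the interior of $M$. Let $\gamma:[0,L]\to M$ be a unit-speed geodesic realizing $\text{dist}(\Sigma,\partial M)$; by the first variation it is perpendicular to $\Sigma$ at $\gamma(0)$ and to $\partial M$ at $\gamma(L)$. Choose an orthonormal basis $\{E_i(0)\}_{i=1}^{n-1}$ of $T_{\gamma(0)}\Sigma$ and parallel transport along $\gamma$; since $\gamma'(L)\perp T_{\gamma(L)}\partial M$, the parallel frame $\{E_i(L)\}$ is automatically an orthonormal basis of $T_{\gamma(L)}\partial M$. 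Applying the second variation of length to the variation fields $E_i$ and summing,
\begin{equation*}
\sum_{i=1}^{n-1} L_i''(0) = -\int_0^L \text{Ric}(\gamma',\gamma')\,dt - H^{\partial M}(\gamma(L)),
\end{equation*}
since the $\Sigma$-endpoint boundary term vanishes by minimality and the $\partial M$-endpoint contributes $-H^{\partial M}$ (the paper's $H^{\partial M}$ uses the inward normal while $\gamma'(L)$ is the outward normal). By $\text{Ric}\geq 0$ and $H^{\partial M}\geq (n-1)k$, the right-hand side is $\leq -(n-1)k<0$, contradicting that $\gamma$ minimizes distance.

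For the isoperimetric inequality, argue by contradiction. If no such $c$ exists, after rescaling there is a sequence of smooth immersed minimal hypersurfaces $\Sigma_k$ with $\partial\Sigma_k\subset\partial M$, $\text{Vol}(\Sigma_k)=1$, and $\text{Vol}(\partial\Sigma_k)\to 0$. The associated integer rectifiable $(n-1)$-varifolds $V_k$ have mass $1$; by minimality and the divergence theorem their first variations are supported on $\partial\Sigma_k$ with total mass $\leq \text{Vol}(\partial\Sigma_k)\to 0$. By Allard's compactness theorem a subsequence converges weakly to a stationary integer rectifiable $(n-1)$-varifold $V$ on $M$ with $M(V)=1$. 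It suffices to show $V=0$. If $\text{supp}(V)\cap\partial M\neq\emptyset$, the smooth strictly mean convex $\partial M$ itself provides a barrier at a contact point. Otherwise, evolve $\partial M$ by inward mean curvature flow; by $\text{Ric}\geq 0$ and the evolution equation for $H$, the evolved surfaces $\partial M_t$ remain smooth and strictly mean convex for short time, foliating a collar of $\partial M$. Let $t^*$ be the first time $\partial M_{t^*}$ meets $\text{supp}(V)$; at this tangential contact point, $\partial M_{t^*}$ is a smooth strictly mean convex barrier with $\text{supp}(V)$ on the side into which its mean curvature vector points. In either case, White's maximum principle for stationary integer varifolds against smooth strictly mean convex barriers forces a contradiction, so $V=0$, contradicting $M(V)=1$.

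The main obstacle is Part 2: the limit varifold $V$ is only stationary, not minimizing, so standard regularity theory does not produce a smooth closed minimal hypersurface and one cannot directly reduce to Part 1. The resolution replaces the tangential-contact comparison of Part 1 with the strong maximum principle of White for stationary varifolds against smooth mean convex barriers. A subsidiary technical point is producing a barrier that actually touches $\text{supp}(V)$; either the level set of $\text{dist}(\cdot,\partial M)$ at the nearest point of $\text{supp}(V)$ (which is smooth and strictly mean convex outside the cut locus by the Riccati comparison under $\text{Ric}\geq 0$) or a short-time MCF of $\partial M$ handles this.
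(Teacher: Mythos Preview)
Your Part 1 is essentially identical to the paper's argument: both take a minimizing geodesic from $\Sigma$ to $\partial M$, parallel-transport an orthonormal frame, sum the second variations, and use minimality of $\Sigma$, $\text{Ric}\geq 0$, and strict mean convexity of $\partial M$ to contradict stability. (Minor quibble: the lemma only assumes $H^{\partial M}>0$, not $H^{\partial M}\geq (n-1)k$; but you only need strict positivity at the single point $\gamma(L)$, so this does not matter.)

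For Part 2 the paper does something much shorter: it simply invokes Theorem~2.1 of White's paper \emph{Which ambient spaces admit isoperimetric inequalities for submanifolds?}, which says that a compact $M$ with no closed embedded minimal hypersurface satisfies a linear isoperimetric inequality for minimal immersions. So the entire content of your Part 2 is a (partial) re-derivation of White's theorem under the additional hypotheses $\text{Ric}\geq 0$ and strict mean convexity.

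As written, your Part 2 has two genuine gaps. First, the ``rescaling'' to $\text{Vol}(\Sigma_k)=1$ is not available: $M$ is a fixed compact manifold and the $\Sigma_k$ are fixed minimal immersions in it, so you cannot dilate. What you can do is normalize the associated varifolds by $\text{Vol}(\Sigma_k)^{-1}$; the resulting varifolds have mass $1$ and first variation of total mass $\text{Vol}(\partial\Sigma_k)/\text{Vol}(\Sigma_k)\to 0$, and compactness of general (not integer) rectifiable varifolds yields a stationary limit $V$ with $\|V\|(M)=1$. Your claim that the limit is \emph{integer} rectifiable then fails, but this is harmless since the Solomon--White strong maximum principle does not require integrality. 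Second, your barrier construction is incomplete. Short-time mean curvature flow of $\partial M$ need not survive long enough to touch $\text{supp}(V)$, and the full level set $\{d(\cdot,\partial M)=\tau\}$ need not be smooth at the first contact point. The correct fix is local: if $p\in\text{supp}(V)$ minimizes distance $\tau$ to $\partial M$ with foot $q\in\partial M$, then the minimizing geodesic from $q$ to $p$ has no focal point before $p$, so a small disk in $\partial M$ around $q$ can be pushed along normal geodesics to distance $\tau$ to produce a smooth hypersurface through $p$; the Riccati comparison under $\text{Ric}\geq 0$ shows its mean curvature only increases from $H^{\partial M}(q)>0$, and $\text{supp}(V)$ lies on the mean-convex side, so Solomon--White applies. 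With these two repairs your argument goes through and gives a self-contained proof of White's theorem in this setting; the paper's route is shorter only because it outsources this work to the citation.
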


\begin{proof}
It suffices to show that if $M$ contains no smooth, closed, embedded minimal hypersurface, then the isoperimetric inequality (2.2) follows from Theorem 2.1 of \cite{White09}. Suppose not, and let $\Sigma$ be a smooth, closed embedded minimal hypersurface in $M$. Since $\partial M$ is strictly mean convex, we have $\Sigma \cap \partial M = \emptyset$ by the strong maximum principle (see \cite{White09}). Therefore, $\Sigma$ and $\partial M$ are a positive distance apart; i.e., $d(\Sigma,\partial M)=\ell>0$. Since both $\partial M$ and $\Sigma$ are compact, there exists a minimizing geodesic $\gamma:[0,\ell] \to M$ (parametrized by arc length) that realizes the distance between $\Sigma$ and $\partial M$. Since $\gamma$ is minimizing, $\gamma$ lies in the interior of $M$ except at $\gamma(\ell) \in \partial M$. Moreover, $\gamma$ is orthogonal to $\Sigma$ and $\partial M$ at the end points. Pick any orthonormal basis $e_1,\ldots,e_{n-1}$ for $T_{\gamma(0)} \Sigma$ and let $V_1,\ldots,V_{n-1}$ be their parallel extensions to normal vector fields  along $\gamma$. If we look at the second variation of $\gamma$ with respect to the normal variation fields $V_i$, $i=1,\ldots, n-1$ and sum over $i$, we get
\begin{equation*}
\sum_{i=1}^{n-1} \delta^2 \gamma(V_i,V_i)=-\int_0^\ell \text{Ric}^M(\gamma',\gamma') ds - H^\Sigma(\gamma(0))-H^{\partial M}(\gamma(\ell)) < 0
\end{equation*}
since $\text{Ric}^M \geq 0$, $\Sigma$ is minimal, and $\partial M$ is strictly mean convex. Therefore, $\delta^2 \gamma(V_i,V_i) <0$ for some $i$ and hence $\gamma$ cannot be stable. This contradicts that $\gamma$ is a minimizing geodesic from $\Sigma$ to $\partial M$. This proves the lemma.
\end{proof}

\begin{remark}
We will later apply (2.2) to properly embedded minimal surfaces with free boundary. However, the isoperimetric inequality (2.2) applies in general to any minimal hypersurface $\Sigma$ without any assumptions on the boundary $\partial \Sigma$.
\end{remark}

The next lemma shows that under the curvature assumptions in Lemma 2.1, any two properly embedded minimal hypersurfaces must intersect each other.

\begin{lemma}
Let $M^n$ be an $n$-dimensional compact orientable Riemannian manifold with nonempty boundary $\partial M$. Suppose $M$ has nonnegative Ricci curvature and the boundary $\partial M$ is strictly convex with respect to the inward unit normal. Then, any two properly embedded orientable minimal hypersurfaces $\Sigma_1$ and $\Sigma_2$ in $M$ with free boundaries on $\partial M$ must intersect; i.e., $\Sigma_1 \cap \Sigma_2 \neq \emptyset$. 
\end{lemma}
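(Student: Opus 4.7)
The plan is to argue by contradiction: suppose $\Sigma_1 \cap \Sigma_2 = \emptyset$. By compactness of $\Sigma_1, \Sigma_2$, the distance $\ell := d(\Sigma_1, \Sigma_2) > 0$ is realized by an arc-length parameterized minimizing geodesic $\gamma : [0, \ell] \to M$ with $\gamma(0) \in \Sigma_1$ and $\gamma(\ell) \in \Sigma_2$.

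First I show that both endpoints lie in the interiors of $\Sigma_1$ and $\Sigma_2$. Suppose instead that $\gamma(0) \in \partial \Sigma_1 \subset \partial M$. Because $\Sigma_1$ meets $\partial M$ orthogonally, the inward conormal to $\partial \Sigma_1$ in $\Sigma_1$ coincides with the inward unit normal $n$ of $\partial M$. Taking the first variation of $p \mapsto d(p, \Sigma_2)$ at $\gamma(0)$ along inward-pointing tangent directions in $\Sigma_1$ forces $\gamma'(0) \perp T\partial\Sigma_1$ and $\langle \gamma'(0), n\rangle \leq 0$. Since $\gamma$ enters the interior of $M$ we also have $\langle \gamma'(0), n\rangle \geq 0$, hence $\gamma'(0) \in T\partial M$. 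In Fermi coordinates $(r, y)$ near $\partial M$, the geodesic equation at $t=0$ then yields
\begin{equation*}
r''(0) = -h^{\partial M}(\gamma'(0), \gamma'(0)) \leq -k < 0
\end{equation*}
by strict convexity, so $r(t) < 0$ for small $t>0$, contradicting $\gamma \subset M$. Hence both endpoints of $\gamma$ are interior, and $\gamma$ meets $\Sigma_j$ orthogonally at each endpoint.

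Next I apply Frankel's second variation trick. Parallel-transport an orthonormal basis $e_1, \ldots, e_{n-1}$ of $T_{\gamma(0)} \Sigma_1$ along $\gamma$ to obtain parallel fields $V_1, \ldots, V_{n-1}$ normal to $\gamma'$; since $\gamma'(\ell) \perp T_{\gamma(\ell)} \Sigma_2$, each $V_i(\ell)$ lies in $T_{\gamma(\ell)} \Sigma_2$, so each $V_i$ is an admissible variation field. Summing the second variation, using $V_i' \equiv 0$ and the minimality of $\Sigma_1, \Sigma_2$, one obtains
\begin{equation*}
\sum_{i=1}^{n-1} I(V_i, V_i) = -\int_0^\ell \text{Ric}^M(\gamma', \gamma') \, dt \leq 0.
\end{equation*}
But $I(V_i, V_i) \geq 0$ for each $i$ by length-minimality of $\gamma$, so equality forces $\text{Ric}^M(\gamma', \gamma') \equiv 0$ along $\gamma$ and each $V_i$ to lie in the null space of the positive semidefinite form $I$.

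The vanishing of $I(V_i, V_i)$ yields strong geometric rigidity: each $V_i$ satisfies the Euler-Lagrange system for $I$ with free endpoints on $\Sigma_1, \Sigma_2$, which forces $R(V_i, \gamma')\gamma' \equiv 0$ along $\gamma$ and $\Sigma_1, \Sigma_2$ to be totally geodesic at $\gamma(0), \gamma(\ell)$. Propagating this via the normal exponential map, a neighborhood of $\gamma$ in $M$ is isometric to a flat product $U \times [0, \ell]$, where $U \subset \Sigma_1$ is open, contains $\gamma(0)$, and satisfies $U \times \{\ell\} \subset \Sigma_2$; each fiber $\{p\} \times [0, \ell]$ is a length-$\ell$ geodesic of $M$ from $\Sigma_1$ to $\Sigma_2$. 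Maximally extending $U$ in $\Sigma_1$, its closure reaches $\partial \Sigma_1$, and for $p \in \partial \Sigma_1$ the limiting fiber geodesic has initial velocity $N_{\Sigma_1}(p) \in T_p \partial M$ by the free boundary condition; the Fermi coordinate calculation of the first step, applied to this geodesic, then yields the desired contradiction with strict convexity. The main obstacle I expect is precisely this propagation step: upgrading the infinitesimal null-variation rigidity along the single geodesic $\gamma$ to a genuine flat strip extending all the way to $\partial \Sigma_1$ in $\Sigma_1$, without conjugate points or cut-locus obstructions blocking the way.
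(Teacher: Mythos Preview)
Your first two steps are fine and are standard: the strict convexity forces the minimizing geodesic between $\Sigma_1$ and $\Sigma_2$ to have interior endpoints, and the Frankel second variation with nonnegative Ricci gives $\sum_i I(V_i,V_i)=0$, hence each parallel $V_i$ lies in the null space of the index form, $R(V_i,\gamma')\gamma'\equiv 0$ along $\gamma$, and $\Sigma_1,\Sigma_2$ are totally geodesic at $\gamma(0),\gamma(\ell)$.

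The gap is exactly where you flagged it. From the rigidity along the \emph{single} geodesic $\gamma$ you cannot conclude that a neighborhood of $\gamma$ is isometric to a flat product $U\times[0,\ell]$. The information you have is pointwise along $\gamma$: the Jacobi fields with initial data $(e_i,0)$ are parallel, so the differential of the normal exponential map $\Phi(p,t)=\exp_p(tN(p))$ is an isometry \emph{at} $(\gamma(0),t)$; but for nearby $p\in\Sigma_1$ you know neither that $h^{\Sigma_1}(p)=0$ nor that $R(\cdot,\partial_t)\partial_t=0$ along the fiber over $p$, which is what the Riccati equation would need to keep the slices totally geodesic. Equivalently, the set $A=\{p\in\Sigma_1:d(p,\Sigma_2)=\ell\}$ is closed and contains $\gamma(0)$, but your argument supplies no mechanism to show $A$ is open; vanishing of the Hessian of $d(\cdot,\Sigma_2)|_{\Sigma_1}$ at $\gamma(0)$ is necessary for, but does not imply, local constancy. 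The ``maximal extension reaching $\partial\Sigma_1$'' is therefore unjustified.

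The paper closes this gap by replacing the geometric propagation with a maximum principle. One works on the region $\Omega$ bounded by $\Sigma_1$, $\Sigma_2$, and a piece $\Gamma\subset\partial M$ (this uses $H_{n-1}(M,\partial M)=0$), and observes that $d_1+d_2$ is superharmonic on $\Omega$ in the barrier sense since $\mathrm{Ric}\ge 0$ and each $\Sigma_j$ is minimal. Strict convexity makes the outward normal derivative of $d_1+d_2$ positive on $\Gamma$, so the minimum lies on $\Sigma_1\cup\Sigma_2$; your minimizing geodesic then forces an interior minimum, and Calabi's strong maximum principle yields $d_1+d_2\equiv\text{const}$, contradicting the positive normal derivative on $\Gamma$. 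An alternative in the paper uses Reilly's formula on a harmonic function with mixed boundary data. Either route gives, for free, the global constancy that your flat-strip propagation is trying to manufacture by hand; if you want to salvage your approach, the cleanest fix is to replace the third step by the superharmonicity of $d_1+d_2$ together with the strong maximum principle.
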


\begin{proof}
We argue by contradiction. Suppose there are two disjoint properly embedded minimal hypersurfaces $\Sigma_1$, $\Sigma_2$ with free boundaries on $\partial M$. Without loss of generality, we assume that both $\Sigma_1$ and $\Sigma_2$ are connected. Note that $\partial \Sigma_1$ and $\partial \Sigma_2$ are nonempty by Lemma 2.2. Since $H_{n-1}(M,\partial M)=0$ by Lemma 2.1, there exists a compact connected domain $\Omega \subset M$ such that $\partial \Omega=\Sigma_1 \cup \Sigma_2 \cup \Gamma$, where $\Gamma$ is a smooth domain in $\partial M$. On $\Omega$, let $d_1$ and $d_2$ be the distance functions from $\Sigma_1$ and $\Sigma_2$ respectively. Since $M$ has nonnegative Ricci curvature and $\Sigma_1$, $\Sigma_2$ are minimal, we have $\Delta^M d_1 \leq 0$ and $\Delta^M d_2 \leq 0$ on $\Omega$ in the barrier sense (see Definition 1 in \cite{Calabi58}) away from $\Sigma_1$ and $\Sigma_2$. Notice that we have used the fact that these minimal hypersurfaces meet $\partial M$ orthogonally so that for any point $x$ in $\Omega \setminus \Sigma_1$, $d_1(x)$ is realized by a geodesic from $x$ to an interior point $y$ on $\Sigma_1$. Hence, the same calculation as in the Laplace comparison theorem for the case without boundary applies here. Therefore, $\Delta^M (d_1+d_2) \leq 0$ in the barrier sense on $\Omega$. We claim that $d_1+d_2$ is constant on $\Omega$. 

If $d_1+d_2$ attains an interior minimum in $\Omega$, the generalized Hopf maximum principle in \cite{Calabi58} implies that $d_1+d_2$ is constant. Since $d_1+d_2$ is continuous, the global minimum must be achieved by some point $p \in \partial \Omega=\Sigma_1 \cup \Sigma_2 \cup \Gamma$. Since $\partial M$ is strictly convex, the outward normal derivative of $d_1+d_2$ is strictly positive on $\Gamma \subset \partial M$. Indeed the outward normal derivative of each of $d_1$ and $d_2$ is positive on $\Gamma$.  Therefore, the minimum $p$ lies on $\Sigma_1$ or  $\Sigma_2$. Assume, without loss of generality, that $p \in \Sigma_1$. Observe that $d_1+d_2=d_2$ on $\Sigma_1$, and $p$ is a point on $\Sigma_1$ that is closest to $\Sigma_2$. Since $\partial M$ is convex and $\Omega$ is connected, there exists a minimizing geodesic $\gamma \subset \Omega$ that connects $p$ to $\Sigma_2$, and such a geodesic $\gamma$ is disjoint from $\partial M$ and meets $\Sigma_1$ and $\Sigma_2$ orthogonally at the end points. On the other hand, as $d_1+d_2=d_1$ on $\Sigma_2$, $\gamma$ actually realizes the distance between $\Sigma_1$ and $\Sigma_2$ in $\Omega$. This implies that $d_1+d_2$ is constant on $\gamma$, but then $d_1+d_2$ has an interior minimum, which implies that $d_1+d_2$ is constant in $\Omega$. However, since the outward normal derivative of $d_1+d_2$ is strictly positive on $\Gamma \subset \partial M$ and $\Gamma$ is nonempty, this is a contradiction. Therefore, $\Sigma_1$ and $\Sigma_2$ cannot be disjoint. 
\end{proof}

\begin{corollary}[Connectedness principle]
Under the curvature assumptions on $M$ and $\partial M$ in Lemma 2.4, any properly embedded minimal hypersurface in $M$ with free boundary is connected.
\end{corollary}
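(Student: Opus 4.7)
The plan is to prove the corollary by a direct contradiction argument using Lemma 2.4, with Lemma 2.2 invoked to rule out closed components. Suppose, for contradiction, that $\Sigma \subset M$ is a properly embedded minimal hypersurface with free boundary on $\partial M$ which is disconnected. Then we can partition the set of connected components of $\Sigma$ into two nonempty collections, yielding a decomposition $\Sigma = \Sigma_1 \sqcup \Sigma_2$ where $\Sigma_1$ and $\Sigma_2$ are disjoint, nonempty, properly embedded minimal hypersurfaces.

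The key point is to show that both $\Sigma_1$ and $\Sigma_2$ have nonempty free boundary on $\partial M$, since this is required to apply Lemma 2.4. Here Lemma 2.2 is crucial: under the hypotheses (which are stronger than those of Lemma 2.2, since strict convexity implies strict mean convexity), $M$ contains no smooth, closed, embedded minimal hypersurface. Hence no connected component of $\Sigma$ can be closed, and so every component meets $\partial M$. Because $\Sigma$ is properly embedded and meets $\partial M$ orthogonally along $\partial \Sigma$, each component inherits the free boundary condition on its own (nonempty) boundary. Therefore $\Sigma_1$ and $\Sigma_2$ are each properly embedded minimal hypersurfaces with nonempty free boundary.

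Applying Lemma 2.4 to this pair of disjoint properly embedded minimal hypersurfaces with free boundary yields $\Sigma_1 \cap \Sigma_2 \neq \emptyset$, contradicting the fact that they are distinct components of an embedded hypersurface. This forces $\Sigma$ to be connected.

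The argument is essentially immediate given Lemmas 2.2 and 2.4; there is no serious obstacle. The only subtle point to get right is that proper embeddedness and the orthogonality condition pass from $\Sigma$ to each of its connected components, so that the hypotheses of Lemma 2.4 are genuinely satisfied by $\Sigma_1$ and $\Sigma_2$.
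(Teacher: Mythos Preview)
Your argument is correct and is exactly the intended one: the paper states this corollary without proof, as it follows immediately from Lemma 2.4 (with Lemma 2.2 already built into that lemma's proof to guarantee nonempty boundaries). Your careful verification that each piece $\Sigma_1,\Sigma_2$ inherits proper embeddedness and the free boundary condition is the only detail worth spelling out, and you have done so.
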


One can prove Lemma 2.4 using a form of Reilly's formula \cite{Reilly77}. In this paper, we will look at compact manifolds with piecewise smooth boundary. We first observe that Reilly's formula holds for such manifolds provided that the function is smooth enough away from the singular set $S$ of the boundary. Note that there is a sign difference in the formula from that in \cite{Choi-Wang83}, since we are using the inward unit normal instead of the outward unit normal.

\begin{lemma}[Reilly's formula]
Let $\Omega$ be a compact $n$-manifold with piecewise smooth boundary $\partial \Omega=\cup_{i=1}^k \Sigma_i$. Suppose $f$ is a continuous function on $\Omega$ where $f \in C^\infty(\Omega \setminus S)$, and $S=\cup_{i=1}^k \partial \Sigma_i$ is the singular set. Assume that there exists some $C>0$, depending only on $f$, such that $\|f\|_{C^3(\Omega')} \leq C$ for all $\Omega' \subset \subset \Omega \setminus S$. Then, Reilly's formula holds:
\begin{align}
0 &= \int_\Omega \text{Ric}^\Omega (D f,D f) - (\Delta^\Omega f)^2 +\|D^2 f\|^2 \\
&+ \sum_{i=1}^k \int_{\Sigma_i} (-\Delta^{\Sigma_i} f + H^{\Sigma_i} \frac{\partial f}{\partial n_i}) \frac{\partial f}{\partial n_i} + \langle \nabla^{\Sigma_i} f,\nabla^{\Sigma_i} \frac{\partial f}{\partial n_i} \rangle+h^{\Sigma_i}(\nabla^{\Sigma_i} f,\nabla^{\Sigma_i} f). \notag
\end{align}
Here, $\text{Ric}^\Omega$ is the Ricci tensor of $\Omega$; $\Delta^\Omega, D^2$ and $D$ are the Laplacian, Hessian, and gradient operators on $\Omega$ respectively; $\Delta^{\Sigma_i}$ and $\nabla^{\Sigma_i}$ are the intrinsic Laplacian and gradient operators on each $\Sigma_i$; $n_i$ is the inward unit normal of $\Sigma_i$; $H^{\Sigma_i}$ and $h^{\Sigma_i}$ are the mean curvature and second fundamental form of $\Sigma_i$ in $\Omega$ with respect to the inward unit normal respectively. 
\end{lemma}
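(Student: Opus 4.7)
My plan is to establish (2.3) in two stages: first for $f$ smooth up to $\overline{\Omega}$, where it is the classical Reilly identity adapted to a piecewise smooth boundary; then for the regularity hypothesized in the lemma via an exhaustion that removes a tubular neighborhood of $S$.

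For $f \in C^3(\overline{\Omega})$ I would integrate the Bochner identity $\tfrac{1}{2}\Delta^\Omega |Df|^2 = \|D^2 f\|^2 + \mathrm{Ric}^\Omega(Df,Df) + \langle Df, D\Delta^\Omega f\rangle$ over $\Omega$ and integrate the last term by parts to obtain
\begin{equation*}
\int_\Omega \bigl[\mathrm{Ric}^\Omega(Df,Df) - (\Delta^\Omega f)^2 + \|D^2 f\|^2\bigr] = \sum_i \int_{\Sigma_i}\left[(\Delta^\Omega f)\tfrac{\partial f}{\partial n_i} - \tfrac{1}{2}\tfrac{\partial \|Df\|^2}{\partial n_i}\right],
\end{equation*}
where the signs reflect the inward-normal convention and the singular set $S$ contributes nothing because $\mathcal{H}^{n-1}(S)=0$ on $\partial\Omega$. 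On each smooth face $\Sigma_i$ the integrand is then manipulated pointwise: decompose $Df = \nabla^{\Sigma_i} f + (\partial f/\partial n_i)\,n_i$, apply $\Delta^\Omega f = \Delta^{\Sigma_i} f - H^{\Sigma_i}(\partial f/\partial n_i) + D^2 f(n_i, n_i)$, and use the Weingarten identity $D^2 f(n_i, v) = v(\partial f/\partial n_i) + h^{\Sigma_i}(\nabla^{\Sigma_i} f, v)$ for $v \in T\Sigma_i$. The resulting $D^2 f(n_i, n_i)(\partial f/\partial n_i)$ contributions cancel and one recovers exactly the per-face integrand of (2.3).

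To drop the hypothesis $f \in C^3(\overline{\Omega})$, I would exhaust $\Omega$ by $\Omega_\delta := \Omega \setminus N_\delta(S)$, where $N_\delta(S)$ is the open tubular $\delta$-neighborhood of the codimension-two set $S$. On $\Omega_\delta$ the function $f$ is smooth up to the boundary, so the smooth case applies; the resulting identity differs from (2.3) only by restricting the bulk and face integrals to $\Omega_\delta$ and by an extra collar contribution $I_\delta := \int_{T_\delta}[\,\cdot\,]$ over $T_\delta := \partial N_\delta(S) \cap \Omega$. The uniform bound $\|f\|_{C^3(\Omega')} \leq C$ for $\Omega' \subset\subset \Omega \setminus S$ provides an $L^\infty$ dominator for the bulk and $\Sigma_i$-integrands, so dominated convergence recovers the $\Omega$- and $\Sigma_i$-parts of (2.3) in the limit $\delta \to 0$ and reduces the proof to $I_\delta \to 0$.

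The collar limit is the main obstacle. Although $\mathcal{H}^{n-1}(T_\delta) = O(\delta)$, the principal curvatures of $T_\delta$ in the normal-sphere direction to $S$ are of order $1/\delta$, so each piece of the Reilly integrand on $T_\delta$ can individually be $O(1/\delta)$ and a naive size bound yields only $|I_\delta| = O(1)$. The crucial point is a pair of pointwise cancellations: writing the collar in Fermi coordinates $(p, r, \theta)$ about each $(n-2)$-dimensional component of $S$ and Taylor-expanding $f = f|_S + r\bigl(a(p)\cos\theta + b(p)\sin\theta\bigr) + O(r^2)$, one verifies that (i) the $O(1/\delta)$ parts of $-\Delta^{T_\delta} f$ and $H^{T_\delta}(\partial f/\partial n)$ cancel, so $(-\Delta^{T_\delta} f + H^{T_\delta}(\partial f/\partial n))(\partial f/\partial n) = O(1)$, and (ii) the singular pieces of $\langle \nabla^{T_\delta} f, \nabla^{T_\delta}(\partial f/\partial n)\rangle$ and $h^{T_\delta}(\nabla^{T_\delta} f, \nabla^{T_\delta} f)$ are equal in magnitude and opposite in sign, leaving the sum $O(1)$ as well. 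Combining, the full $T_\delta$-integrand is bounded uniformly in $\delta$, so $|I_\delta| = O(\delta) \to 0$ and (2.3) follows.
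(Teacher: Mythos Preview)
Your approach is essentially the paper's: the paper simply observes that $S$ has codimension two, invokes Lang's ``Stokes theorem with singularities'' as a black box, and then cites the Choi--Wang derivation of Reilly's formula for the smooth case. Your exhaustion $\Omega_\delta = \Omega \setminus N_\delta(S)$ is a hands-on proof of exactly that Stokes-with-singularities statement, so the two arguments coincide in substance.

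One simplification worth noting: your collar analysis on $T_\delta$ is more laborious than necessary. You convert the $T_\delta$ boundary term into Reilly form, which introduces the curvatures $H^{T_\delta}$ and $h^{T_\delta}$ of order $1/\delta$, and then work out a Taylor-expansion cancellation. But this cancellation is automatic: the Reilly per-face integrand is obtained \emph{pointwise} from the raw divergence-theorem integrand
\[
(\Delta^\Omega f)\,\frac{\partial f}{\partial n} \;-\; \tfrac{1}{2}\,\frac{\partial}{\partial n}|Df|^2,
\]
which involves only derivatives of $f$ up to third order and no geometry of the face. By the uniform $C^3$ bound this is $O(1)$ on $T_\delta$, so $|I_\delta| \le C\cdot \mathcal{H}^{n-1}(T_\delta) = O(\delta)$ directly---no expansion needed. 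Keeping the raw form on the collar and only rewriting into Reilly form on the fixed faces $\Sigma_i$ shortens the argument considerably, and is effectively what the cited Stokes-with-singularities theorem encodes.
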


\begin{proof}
Since the singular set $S=\cup_{i=1}^k \partial \Sigma_i$ has codimension two in $\Omega$, the smoothness assumption of $f$ implies that Stokes' Theorem with singularites (Theorem 3.3 of \cite{Lang02}) is applicable. Hence, the same proof as in Theorem 1 of \cite{Choi-Wang83} gives the desired result. 
\end{proof}

Using Lemma 2.6, we give an alternative proof of Lemma 2.4.

\begin{proof}[Alternative Proof of Lemma 2.4]
We will prove Lemma 2.4 by contradiction. Suppose $\Sigma_1$ and $\Sigma_2$ are connected and disjoint. Let $\Omega$ be the connected domain bounded by $\Sigma_1$ and $\Sigma_2$ modulo $\partial M$ as before. Note that $\Omega$ is a compact $n$-manifold with piecewise smooth boundary $\partial \Omega=\Sigma_1 \cup \Sigma_2 \cup \Gamma$, where $\Gamma \subset \partial M$. Let int$(\Omega)$ denote the interior of $\Omega$. Consider the following mixed Dirichlet-Neumann boundary value problem on $\Omega$:
\begin{equation}
\begin{cases}
\Delta^\Omega f=0 & \text{ on int}(\Omega)\\
f=0 & \text{ on } \Sigma_1, \\
f=1 & \text{ on } \Sigma_2, \\
\frac{\partial f}{\partial n}=0 & \text{ on } \Gamma,
\end{cases}
\end{equation}
where $n$ is the outward unit normal on $\partial M$. Since $\Sigma_1$ and $\Sigma_2$ meet $\partial M$ orthogonally along their boundaries, there exists a function $\varphi \in C^\infty(\Omega)$ such that 
\begin{equation*}
\begin{cases}
\varphi=0 & \text{ on } \Sigma_1,\\
\varphi=1 & \text{ on } \Sigma_2,\\
\frac{\partial \varphi}{\partial n}=0 & \text{ on } \Gamma.
\end{cases}
\end{equation*}
Letting $\hat{f}=f-\varphi$, the mixed boundary value problem (2.4) is equivalent to the following mixed boundary value problem with zero boundary data:
\begin{equation}
\begin{cases}
\Delta^\Omega \hat{f}= \Delta^\Omega \varphi & \text{ on int} (\Omega), \\
\hat{f}=0 & \text{ on } \Sigma_1 \cup \Sigma_2,\\
\frac{\partial \hat{f}}{\partial n}=0 & \text{ on } \Gamma.
\end{cases}
\end{equation}
Since $\Delta^\Omega \varphi \in C^\infty(\Omega)$, classical results for elliptic equations with homogeneous boundary data (\cite{Agmon-Douglis-Nirenberg59} and \cite{Lieberman86}) imply that a solution to (2.5) exists in the classical sense and the solution $\hat{f} \in C^{0,\alpha}(\Omega) \cap C^\infty(\Omega \setminus S)$, where $S=\partial \Sigma_1 \cup \partial \Sigma_2$, and therefore $f=\hat{f}+\varphi \in C^{0,\alpha}(\Omega) \cap C^\infty(\Omega \setminus S)$ is a solution to (2.4), with uniform $C^3$ estimates away from the singular set $S$. Applying Reilly's formula in Lemma 2.6 to $f$ and $\Omega$, as $\Delta^\Omega f=0$, we obtain
\begin{equation}
0 \geq \int_\Omega \text{Ric}^M (D f, D f) +\int_\Gamma h^{\partial M}(\nabla^{\partial M} f, \nabla^{\partial M}f).
\end{equation}
The boundary terms for $\Sigma_1$ and $\Sigma_2$ vanish since $\Sigma_1$ and $\Sigma_2$ are minimal and $f$ is constant on $\Sigma_1$ and $\Sigma_2$. Since $\text{Ric}^M \geq 0$ and $h^{\partial M} \geq k>0$, (2.6) implies that $\nabla^{\partial M}f=0$ and hence $f$ is locally constant on $\Gamma$, which is impossible since $\partial \Gamma = \partial \Sigma_1 \cup \partial \Sigma_1$ and $f=0$ on $\Sigma_1$ but $f=1$ on $\Sigma_2$. Thus, we have a contradiction. 
\end{proof}
 
\begin{remark}
\begin{itemize}
\item[(a)] Note that the free boundary condition comes in the proof in a rather subtle way that gives enough regularity to the mixed boundary value problem (2.4) to apply Reilly's formula in Lemma 2.6. If the free boundary condition is dropped, the theorem is no longer true. For example, there are disjoint flat disks in the unit ball in $\mathbb{R}^3$. 
\item[(b)] The theorem does not hold if we only assume that $\text{Ric}^M \geq 0$ and $\partial M$ is only weakly convex. For example, let $B^{n-1}$ be the $(n-1)$-dimensional unit ball in $\mathbb{R}^n$ and $M$ be the product manifold $B^{n-1} \times [0,1]$ smoothly capped off by two unit half $n$-balls at the two ends. Then all the slices $B^{n-1} \times \{t\}$, $t \in [0,1]$, are mutually disjoint embedded minimal hypersurfaces with free boundary on $\partial M$. However, there are some rigidity results in this case (see \cite{Petersen-Wilhelm03}).
\end{itemize}
\end{remark}

We now prove a result about the connectedness of $\partial M$ for $M$ with nonnegative curvature and strictly mean convex boundary. 

\begin{proposition}
Let $M^n$ be an $n$-dimensional compact Riemannian manifold with nonempty boundary $\partial M$. Suppose $M$ has nonnegative Ricci curvature and the boundary $\partial M$ is strictly mean convex with respect to the inward unit normal. Then $\partial M$ is connected, and the homomorphism 
\begin{equation*}
\pi_1(\partial M) \xrightarrow{i_*} \pi_1(M)
\end{equation*}
induced by the inclusion map $i:\partial M \to M$ is surjective.
\end{proposition}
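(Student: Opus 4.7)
For the connectedness assertion, my plan is to argue by contradiction exactly along the lines of Lemma~2.2. Assume $\partial M = \partial_1 M \sqcup \partial_2 M$ with both components closed and nonempty, and let $\gamma:[0,\ell] \to M$ be a minimizing unit-speed geodesic realizing $d(\partial_1 M, \partial_2 M) > 0$; the first variation condition forces $\gamma$ to meet each $\partial_i M$ orthogonally, and $\gamma$ lies in $\mathrm{int}(M)$ on $(0,\ell)$. Take an orthonormal basis of $T_{\gamma(0)}\partial_1 M$ and parallel-transport along $\gamma$ to obtain admissible variation fields $V_1,\ldots,V_{n-1}$ (each tangent to $\partial_2 M$ at the far endpoint). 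Summing the second variation of length over the $V_i$ gives
\[
\sum_{i=1}^{n-1} \delta^2 \gamma(V_i,V_i) = -\int_0^\ell \mathrm{Ric}^M(\gamma',\gamma')\,ds - H^{\partial_1 M}(\gamma(0)) - H^{\partial_2 M}(\gamma(\ell)),
\]
with mean curvatures taken with respect to the inward unit normal. Nonnegative Ricci and strict mean convexity make this strictly negative, contradicting minimality of $\gamma$, so $\partial M$ must be connected.

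For the surjectivity of $i_*$, my plan is to run the same second variation argument inside a suitable Riemannian cover. Set $H := i_*\pi_1(\partial M) \subset \pi_1(M)$ and let $p:\widetilde M \to M$ be the connected cover corresponding to $H$, equipped with the pulled-back metric, so that $p$ is a local isometry and $\widetilde M$ inherits nonnegative Ricci with $\partial \widetilde M = p^{-1}(\partial M)$ strictly mean convex with the same lower bound. By standard covering theory applied to the connected set $\partial M$ (known connected by the first part), the component $\widetilde{\partial M}_0 \subset \partial \widetilde M$ containing a chosen lift of the basepoint corresponds to the subgroup $i_*^{-1}(H) = \pi_1(\partial M)$; it is therefore the trivial cover of $\partial M$, hence homeomorphic to $\partial M$ and in particular compact. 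If $H \subsetneq \pi_1(M)$, then for any $g \in \pi_1(M) \setminus H$ the double coset $HgH$ is distinct from $H$, producing a further component $A \subset \partial \widetilde M$ disjoint from $\widetilde{\partial M}_0$. I would then produce a minimizing geodesic $\widetilde\gamma$ from $\widetilde{\partial M}_0$ to $A$ realizing $d_{\widetilde M}(\widetilde{\partial M}_0, A) > 0$ and apply the identical second variation computation, which is valid since $p$ is a local isometry, to conclude that $\widetilde\gamma$ cannot minimize, a contradiction. Therefore $\partial \widetilde M$ is connected, forcing $H = \pi_1(M)$.

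The main obstacle is the realization step for the minimizing geodesic, since $A$ need not be compact in $\widetilde M$. I would handle this using compactness of $\widetilde{\partial M}_0$: take a minimizing sequence $(a_n, b_n) \in \widetilde{\partial M}_0 \times A$ with $d(a_n, b_n) \to d(\widetilde{\partial M}_0, A)$, extract a subsequence with $a_n \to a_\infty \in \widetilde{\partial M}_0$ by compactness, observe that the $b_n$ then lie eventually in a closed geodesic ball around $a_\infty$ of radius slightly larger than the infimum (compact by completeness of the Riemannian cover $\widetilde M$ of the compact manifold $M$), and extract a further subsequence $b_n \to b_\infty \in A$ to obtain the realizing pair.
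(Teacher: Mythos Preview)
Your proof is correct and follows essentially the same strategy as the paper: the connectedness argument via second variation of a minimizing geodesic between hypothetical boundary components is identical to the paper's, and for surjectivity both you and the paper pass to a cover and re-run the boundary-connectedness argument there. The paper passes to the \emph{universal} cover and simply asserts that ``the same argument applies'' (deferring to Lawson); you instead pass to the cover corresponding to $H = i_*\pi_1(\partial M)$. Your choice is a genuine refinement: it guarantees that the distinguished component $\widetilde{\partial M}_0$ is a trivial cover of $\partial M$ and hence compact, which, combined with completeness of the Riemannian cover, makes the existence of a distance-realizing pair transparent even though $\widetilde M$ itself may be non-compact. In the universal cover all boundary components can be non-compact, so the paper's one-line appeal requires more justification than it supplies.

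One small point worth tightening: to ensure the minimizing geodesic $\widetilde\gamma$ lies in the interior of $\widetilde M$ on $(0,\ell)$ (so that the second variation formula with the boundary mean-curvature terms applies), you should minimize the distance from $\widetilde{\partial M}_0$ to \emph{all} of $\partial\widetilde M \setminus \widetilde{\partial M}_0$ rather than to a single pre-chosen component $A$. Otherwise, if there are three or more components, the minimizer to $A$ could in principle touch a third component at an interior time. With this trivial adjustment, any interior boundary touching forces a strictly shorter competitor, and the argument goes through unchanged.
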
 

\begin{proof}
The proof is similar to the one in Lemma 2.2. If $\partial M$ is not connected, there exists a minimizing geodesic from one component of $\partial M$ to another component that realizes the distance between them. However, the second variation formula and the curvature assumptions on $M$ and $\partial M$ imply that $\gamma$ is unstable, which is a contradiction. Therefore, $\partial M$ is connected. (One can also give a different proof using Reilly's formula (2.3). Suppose $\partial M$ is not connected. Let $\Sigma$ be one of its components. Take $f \in C^\infty(M)$ to be a harmonic function that is equal to one on $\Sigma$ and is equal to zero on $\partial M \setminus \Sigma$, which is nonempty. Then, Reilly's formula implies that $f$ is constant, which is a contradiction.) The same argument applies to the universal cover $\tilde{M}$ of $M$. Thus $\partial \tilde{M}$ is connected and this implies the surjectivity of the homomorphism $\pi_1(\partial M) \to \pi_1(M)$ as in \cite{Lawson70}.
\end{proof}

\begin{remark}
As noted in \cite{Lawson70}, Proposition 2.8 remains true if $\partial M$ is assumed to be only piecewise smooth and if the interior angle between two smooth boundary pieces is always less than $\pi$.
\end{remark}

From the alternative proof of Lemma 2.4, we actually proved that if $\Omega$ is a connected $n$-manifold with nonnegative Ricci curvature and piecewise smooth boundary $\partial \Omega$, where we can decompose $\partial \Omega=\Gamma_1 \cup \Gamma_2$ with $\Gamma_1$ non-empty and strictly convex with respect to the inward unit normal, and $\Gamma_2$ minimal, then $\Gamma_2$ must be connected. The following corollary is an immediate consequence following the arguments in the proof of Theorem 2 in \cite{Lawson70}.

\begin{corollary}
Let $M^n$ be a compact $n$-dimensional Riemannian manifold with nonempty boundary $\partial M$. Suppose $M$ has nonnegative Ricci curvature and the boundary $\partial M$ is strictly convex with respect to the inward unit normal. Let $\Sigma$ be a properly embedded minimal hypersurface in $M$ with free boundary on $\partial M$. If both $\Sigma$ and $M$ are orientable, then $\Sigma$ divides $M$ into two connected components $\Omega_1$ and $\Omega_2$.
\end{corollary}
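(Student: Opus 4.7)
The plan is to prove the two assertions separately. For the separation of $M\setminus\Sigma$, orientability of both $M$ and $\Sigma$ implies $\Sigma$ is two-sided in $M$, and Corollary~2.5 gives that $\Sigma$ is connected. Lemma~2.1 then yields $H_{n-1}(M,\partial M;\mathbb{Z})=0$, so $[\Sigma]=0$ in this relative homology group; a connected two-sided null-homologous hypersurface must separate, producing exactly the two components $\Omega_1,\Omega_2$.

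For the $\pi_1$-surjectivity I would adapt Lawson's covering space strategy via Reilly's formula. Fix $k$, write $N=\overline{\Omega}_k$ and $\Gamma=\partial N\cap\partial M$; then $\partial N=\Sigma\cup\Gamma$ meets at dihedral angle $\pi/2$ along $\partial\Sigma$. Argue by contradiction: if $H:=(i_k)_*\pi_1(\Sigma)$ is a proper subgroup of $\pi_1(N)$, the cover $p:\tilde N\to N$ corresponding to $H$ contains at least two disjoint homeomorphic lifts $\tilde\Sigma_0,\tilde\Sigma_1$ of $\Sigma$ as boundary components; both are minimal, and $\tilde\Gamma:=p^{-1}(\Gamma)$ inherits strict convexity from $\partial M$. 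I would now imitate the alternative proof of Lemma~2.4: solve the mixed boundary value problem $\Delta^{\tilde N}f=0$ in $\operatorname{int}\tilde N$ with $f\equiv c_g$ (constant) on each lift $\tilde\Sigma_g$ of $\Sigma$ (prescribing $c_0=0$ and $c_1=1$), and Neumann condition $\partial f/\partial n=0$ on $\tilde\Gamma$. Reilly's formula (Lemma~2.6) annihilates every $\tilde\Sigma_g$-boundary contribution, since $f$ is constant there and $\tilde\Sigma_g$ is minimal, leaving
\[
0=\int_{\tilde N}\bigl(\operatorname{Ric}^M(Df,Df)+\|D^2 f\|^2\bigr)+\int_{\tilde\Gamma}h^{\partial M}(\nabla^{\partial M}f,\nabla^{\partial M}f).
\]
All integrands being nonnegative forces $D^2 f\equiv 0$ on $\tilde N$ and $\nabla^{\partial M}f\equiv 0$ along $\tilde\Gamma$; together with the Neumann condition, $Df\equiv 0$ on $\tilde\Gamma$, and the parallelism of $Df$ (from $D^2 f=0$) then propagates $Df\equiv 0$ to all of the connected cover $\tilde N$. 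Hence $f$ is globally constant, contradicting $c_0\neq c_1$.

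The principal technical difficulty I anticipate is that when $H$ has infinite index the cover $\tilde N$ is noncompact and the above mixed boundary value problem is not directly well-posed. I would circumvent this either by passing to a finite subcover when one exists, or by restricting to a compact subdomain of $\tilde N$ whose boundary consists only of $\tilde\Sigma_0$, $\tilde\Sigma_1$, and a connecting piece of $\tilde\Gamma$; such a subdomain can be located via the piecewise-smooth version of Proposition~2.8 noted in Remark~2.9 applied to $\tilde N$, since the dihedral angle $\pi/2$ along the $\partial\Sigma$-lifts is strictly less than $\pi$.
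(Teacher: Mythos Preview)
The paper does not supply a proof; the corollary is positioned to follow from Proposition~2.8 and Remark~2.9, i.e.\ from the second-variation-of-geodesics argument applied to $\overline{\Omega}_k$ (a manifold with piecewise-smooth boundary $\Sigma\cup\Gamma_k$ meeting at angle $\pi/2<\pi$). Your Reilly-formula route is modelled instead on the paper's \emph{alternative} proof of Lemma~2.4, so the two approaches are genuinely different.

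There are two points where your argument does not match the stated hypotheses or is incomplete.

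\emph{Hypothesis mismatch.} Corollary~2.10 assumes only strict \emph{mean} convexity of $\partial M$, whereas every ingredient you call on requires strict convexity: Lemma~2.1 (for $H_{n-1}(M,\partial M)=0$), Corollary~2.5 (for connectedness of $\Sigma$), and the Reilly step itself (you need $h^{\partial M}(\nabla^{\partial M} f,\nabla^{\partial M} f)\geq 0$, which is convexity, not mean convexity). Your argument therefore establishes the result only under the stronger convexity hypothesis---which, to be fair, is the only case the paper actually uses (Theorem~3.1). The geodesic approach underlying Proposition~2.8 is better adapted to mean convexity since only the mean curvatures enter the second variation.

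\emph{The noncompact cover.} You correctly flag the difficulty when $[\pi_1(N):H]=\infty$, but your proposed remedies do not work: there is in general no finite intermediate cover, and carving out a compact subdomain ``via Remark~2.9'' introduces an artificial boundary piece on which you have no natural boundary condition for the mixed problem, so Reilly's formula no longer closes up. By contrast, the geodesic approach handles the noncompact cover directly: one lift $\tilde\Sigma_0$ of $\Sigma$ is compact and the cover is complete, so a minimizing geodesic from $\tilde\Sigma_0$ to the closed union of the remaining lifts exists, and the second-variation contradiction goes through. This is the main practical advantage of the paper's intended route over yours.
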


When $n=3$, we get much stronger topological restrictions on $M^3$ from the curvature and boundary convexity assumptions.

\begin{theorem}
Let $M^3$ be a compact Riemannian 3-manifold with nonempty boundary $\partial M$. Assume $M$ has nonnegative Ricci curvature. 
\begin{itemize}
\item[(a)] If $M$ is orientable and $\partial M$ is strictly mean convex with respect to the inner unit normal, then $M^3$ is diffeomorphic to a $3$-dimensional handlebody.
\item[(b)] If $\partial M$ is strictly convex with respect to the inner unit normal, then $M^3$ is diffeormorphic to the $3$-ball $B^3$.
\end{itemize}
\end{theorem}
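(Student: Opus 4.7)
I would tackle (a) and (b) with different mechanisms, since (a) allows only mean convexity while (b) assumes full convexity of $\partial M$.

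For (b), the cleanest path is a direct homology argument. By Proposition 2.8, $\partial M$ is connected, of some genus $g$. Strict convexity implies strict mean convexity, so Lemma 2.1 gives $H_2(M,\partial M;\mathbb{Z}) = 0$. The long exact sequence of the pair then yields an injection $H_1(\partial M;\mathbb{Z}) \hookrightarrow H_1(M;\mathbb{Z})$. By Poincar\'e--Lefschetz duality $H_1(M;\mathbb{Z}) \cong H^2(M,\partial M;\mathbb{Z})$, and the Universal Coefficient Theorem together with $H_2(M,\partial M) = 0$ makes the right-hand side torsion. Since $H_1(\partial M) = \mathbb{Z}^{2g}$ is torsion-free, this forces $g = 0$, i.e.\ $\partial M \cong S^2$. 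Then Proposition 2.8 gives $\pi_1(M) = 1$, and capping off the boundary with a $3$-ball produces a closed simply connected $3$-manifold, which is $S^3$ by Perelman's resolution of the Poincar\'e conjecture; hence $M \cong B^3$.

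For (a), only mean convexity is available, so the argument must proceed topologically after one minimal-surface input. First, $M$ is irreducible: if an embedded $S^2 \subset M$ failed to bound a $3$-ball, Meeks--Simon--Yau isotopy-class area minimization (with strict mean convexity of $\partial M$ providing a barrier) would yield a nontrivial closed embedded minimal surface in $M$, contradicting Lemma 2.2. Let $g$ be the genus of $\partial M$ (connected, by Proposition 2.8), and I induct on $g$. The base case $g=0$ follows as in (b): Proposition 2.8 forces $\pi_1(M) = 1$, and capping off plus Perelman gives $M \cong B^3$. For $g \geq 1$, I claim $\partial M$ is compressible in $M$; otherwise the inclusion-induced map $\pi_1(\partial M) \to \pi_1(M)$ would be both injective and (by Proposition 2.8) surjective, and Waldhausen's theorem would force $M \cong \partial M \times [0,1]$, contradicting that $\partial M$ is connected. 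Hence by the loop theorem there is a properly embedded disk $D \subset M$ with $\partial D$ essential on $\partial M$. Cutting $M$ along $D$ produces an irreducible orientable $3$-manifold $M'$ whose boundary has strictly smaller total genus (whether $D$ is separating or not). Applying the inductive hypothesis to each component of $M'$ (purely topologically), each is a handlebody; reattaching the $1$-handle dual to $D$ shows $M$ is too.

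The main obstacle I expect lies in (a): once we cut $M$ along $D$, the Riemannian hypotheses no longer apply to $M'$, so the induction must be carried out in the topological category. The saving point is that irreducibility of $M$---the only metric-dependent input---is established once and for all before the induction begins, after which the loop theorem, cutting along essential disks, and handlebody reassembly are classical $3$-manifold topology and require no Riemannian hypothesis on the cut pieces. A secondary concern is verifying that the mean convex $\partial M$ serves as the barrier needed by Meeks--Simon--Yau, which is standard but worth flagging.
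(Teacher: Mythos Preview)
Your routes for both parts differ from the paper's, and part (b) has a genuine gap: $M$ is \emph{not} assumed orientable there (the remark after the theorem makes this explicit), yet your argument invokes Poincar\'e--Lefschetz duality $H_1(M;\mathbb{Z}) \cong H^2(M,\partial M;\mathbb{Z})$, which fails over $\mathbb{Z}$ for non-orientable $M$. The paper treats the non-orientable case separately: the orientable double cover $\tilde{M}$ is $B^3$ by the orientable case, so $\partial\tilde{M}=S^2$ double-covers $\partial M$, forcing $\partial M \cong \mathbb{R}P^2$; but a boundary has vanishing Stiefel--Whitney numbers (Pontrjagin), contradicting $w_1^2[\mathbb{R}P^2]\neq 0$. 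Your argument as written gives no replacement for this step.

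On the comparison: for (a) the paper simply cites Theorem~5 of Meeks--Simon--Yau, whose content is exactly that a compact orientable 3-manifold with mean-convex boundary and no closed embedded minimal surface (Lemma~2.2) is a handlebody; you are essentially re-deriving this by hand. Two side remarks on your (a): your base case $g=0$ does not actually need Perelman, since once $M$ is irreducible a sphere parallel to $\partial M=S^2$ bounds a ball on its inner side, so $M\cong B^3$ directly; and your ``Waldhausen'' step is slightly misphrased---with $\partial M$ connected the conclusion would be a \emph{twisted} $I$-bundle rather than $\partial M \times [0,1]$, though the contradiction survives (in a twisted $I$-bundle the boundary inclusion is index-2 on $\pi_1$, not surjective), and half-lives-half-dies yields compressibility more cleanly. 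For (b) the paper is much lighter: part (a) already gives a handlebody, and a genus-$g$ handlebody has $H_2(M,\partial M)\cong \mathbb{Z}^g$, so $H_2(M,\partial M)=0$ from Lemma~2.1 forces $g=0$, hence $M\cong B^3$ with no appeal to the Poincar\'e conjecture. Your duality-plus-Perelman route is valid in the orientable case but imports far heavier machinery than needed.
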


\begin{remark}
Note that we do not need to assume $M$ is orientable in case (b); it follows as a consequence. For higher dimensions $n \geq 4$, we conjecture that in case (b), $M^n$ has finite fundamental group (see Conjecture 1.3 in \cite{Li12}).
\end{remark}

\begin{proof}
First, we assume that $M$ is orientable. If $\partial M$ is strictly mean convex and nonempty, then it is connected, by Proposition 2.8. Using Theorem 5 in \cite{Meeks-Simon-Yau82}, we know that $M$ is a handlebody. If $\partial M$ is strictly convex, then we also have  $H_2(M,\partial M)=0$ from Lemma 2.1, which implies that $M$ is diffeomorphic to the $3$-ball $B^3$. 

Suppose $M$ is non-orientable and $\partial M$ is strictly convex. Then the orientable double cover $\tilde{M}$ is the $3$-ball $B^3$. Therefore, $\partial \tilde{M}=S^2$ is a double cover of $\partial M$; thus $\partial M$ is homeomorphic to $\mathbb{R}P^2$. However, since $\partial M$ is the boundary of a compact manifold, by a theorem of Pontrjagin \cite{Pontryagin50}, all the Stiefel-Whitney numbers of $\partial M$ vanish. However $w_1(\mathbb{R}P^2)=w_2(\mathbb{R}P^2)=1$. This is a contradiction. So $M$ must be orientable.
\end{proof}


\section{Steklov Eigenvalue Estimate}

In this section, we prove a lower bound for the first Steklov eigenvalue of a compact properly embedded minimal hypersurface satisfying the free boundary condition in a compact orientable manifold $M$ with boundary, where $M$ has nonnegative Ricci curvature and $\partial M$ is strictly convex. We refer the reader to section 2 of \cite{Fraser-Schoen11} for a brief description of the Dirichlet-to-Neumann map and Steklov eigenvalues.

\begin{theorem}
Let $M^n$ be an $n$-dimensional compact orientable Riemannian manifold with nonempty boundary $\partial M$. Suppose $M$ has nonnegative Ricci curvature and the boundary $\partial M$ is strictly convex with respect to the inward unit normal. Let $k>0$ be a constant such that $h^{\partial M}(u,u) \geq k >0$ for any unit vector $u$ tangent to $\partial M$. 

Let $\Sigma$ be a properly embedded minimal hypersurface in $M$ with free boundary on $\partial M$. Supose one of the following holds,
\begin{itemize}
\item[(i)] $\Sigma$ is orientable; or
\item[(ii)] $\pi_1(M)$ is finite;
\end{itemize}
then we have the following eigenvalue estimate
\begin{equation*}
\sigma_1(\Sigma) \geq \frac{k}{2},
\end{equation*}
where $\sigma_1(\Sigma)$ is the first non-zero Steklov eigenvalue of the Dirichlet-to-Neumann map on $\Sigma$. 
\end{theorem}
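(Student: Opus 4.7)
The plan is to adapt Escobar's Reilly-formula proof of the analogous ambient bound $\sigma_1(M)\geq k/2$, working on the two components of $M\setminus\Sigma$ so that the sign-indefinite second-fundamental-form terms of $\Sigma$ cancel. First I would dispose of case (ii): if $\pi_1(M)$ is finite and $\Sigma$ is non-orientable, pass to a finite Riemannian cover $\widetilde{M}\to M$ (e.g.\ the universal cover, which is compact by finiteness of $\pi_1(M)$) in which $\Sigma$ lifts to an orientable, two-sided, properly embedded minimal hypersurface $\widetilde{\Sigma}\subset\widetilde{M}$ with free boundary; connectedness of $\widetilde{\Sigma}$ follows from Lemma~2.4 applied in $\widetilde{M}$, and the curvature assumptions lift. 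Pulling back the first Steklov eigenfunction of $\Sigma$ as a test function on $\widetilde{\Sigma}$ gives $\sigma_1(\widetilde{\Sigma})\leq\sigma_1(\Sigma)$, so it suffices to prove the bound in case (i), where Corollary~2.10 decomposes $M\setminus\Sigma=\Omega_1\sqcup\Omega_2$ with $\partial\Omega_k=\Sigma\cup\Gamma_k$ and $\Gamma_k\subset\partial M$.

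Let $u$ be a first Steklov eigenfunction, so $\Delta^\Sigma u=0$ on $\Sigma$ and $\partial u/\partial\eta=\sigma_1 u$ on $\partial\Sigma$, with $\eta$ the outward conormal of $\Sigma$. For each $k=1,2$ I would extend $u$ to $\hat{u}_k$ on $\Omega_k$ by solving the mixed boundary value problem
\begin{equation*}
\Delta^M \hat{u}_k = 0 \text{ in } \Omega_k, \qquad \hat{u}_k = u \text{ on } \Sigma, \qquad \partial\hat{u}_k/\partial\nu^{\mathrm{in}}=0 \text{ on } \Gamma_k,
\end{equation*}
where $\nu^{\mathrm{in}}$ is the inward unit normal to $\partial M$. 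The free boundary identification $\eta=-\nu^{\mathrm{in}}$ on $\partial\Sigma$ renders the two data generically incompatible at the corner $\partial\Sigma$; nevertheless, the weak solution is H\"older continuous on $\overline{\Omega_k}$ and smooth away from $\partial\Sigma$, so the version of Reilly's formula in Lemma~2.6 applies.

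Applying Lemma~2.6 to $\hat{u}_k$ and simplifying using (a) harmonicity of $\hat{u}_k$; (b) minimality of $\Sigma$ ($H^\Sigma=0$); (c) $\Delta^\Sigma u=0$; (d) the Neumann condition on $\Gamma_k$, which annihilates every boundary term there except $h^{\partial M}(\nabla\hat{u}_k,\nabla\hat{u}_k)$; and (e) integration by parts on $\Sigma$ using $\partial_\eta u=\sigma_1 u$ to convert $\int_\Sigma\langle\nabla^\Sigma u,\nabla^\Sigma v_k\rangle$ into $\sigma_1\int_{\partial\Sigma}u v_k$, yields
\begin{equation*}
0 = \int_{\Omega_k}\bigl[\mathrm{Ric}^M(D\hat{u}_k,D\hat{u}_k)+\|D^2\hat{u}_k\|^2\bigr] + \int_{\Gamma_k} h^{\partial M}(\nabla\hat{u}_k,\nabla\hat{u}_k) + \sigma_1\int_{\partial\Sigma} u\, v_k + \int_\Sigma h^\Sigma_{N_k^{\mathrm{in}}}(\nabla u,\nabla u),
\end{equation*}
with $v_k=\partial\hat{u}_k/\partial N_k^{\mathrm{in}}$ and $N_k^{\mathrm{in}}$ the inward normal to $\Omega_k$ along $\Sigma$. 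Summing over $k=1,2$ and using $N_2^{\mathrm{in}}=-N_1^{\mathrm{in}}$ (so $h^\Sigma_{N_2^{\mathrm{in}}}=-h^\Sigma_{N_1^{\mathrm{in}}}$), the sign-indefinite $\int_\Sigma h^\Sigma$ contributions cancel exactly, and combined with $h^{\partial M}\geq k$ this produces
\begin{equation*}
-\sigma_1\int_{\partial\Sigma} u(v_1+v_2) \;\geq\; k\int_{\partial M} |\nabla^{\partial M}\hat{u}|^2,
\end{equation*}
where $\hat{u}$ is the continuous piecewise function equal to $\hat{u}_k$ on $\Omega_k$.

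To conclude $\sigma_1\geq k/2$, I would integrate by parts on each $\Gamma_k$, using the free boundary identification $N_k^{\mathrm{in}}\in T\partial M$ along $\partial\Sigma$ to evaluate the conormal derivative of $\hat{u}|_{\partial M}$ at $\partial\Sigma=\partial\Gamma_k$ in terms of $-(v_1+v_2)$; this gives the identity
\begin{equation*}
-\int_{\partial\Sigma} u(v_1+v_2) \;=\; \int_{\partial M}|\nabla^{\partial M}\hat{u}|^2 + \sum_{k=1}^{2}\int_{\Gamma_k}\hat{u}_k\Delta^{\Gamma_k}\hat{u}_k.
\end{equation*}
Combining this identity with the previous inequality, the Steklov identity $\sigma_1\int_{\partial\Sigma}u^2=\int_\Sigma|\nabla^\Sigma u|^2$, and a Cauchy-Schwarz/AM-GM step of Escobar type (in which the constant $\tfrac{1}{2}$ appears via the coefficient $k-2\sigma_1$ analogous to the ambient case) then delivers the bound. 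The main obstacle is precisely this final extraction: the summed Reilly identity gives only a global coupling of $u$ to its ambient extension, so distilling the sharp numerical constant $\tfrac{1}{2}$ requires a delicate balance among the boundary Dirichlet energy on $\partial M$, the $\partial M$-Laplacian terms (which by the Neumann condition equal $-\partial^2_{\nu^{\mathrm{in}}}\hat{u}_k$ on each $\Gamma_k$), the Steklov identity on $\Sigma$, and a careful treatment of the corner singularity of $\hat{u}_k$ at $\partial\Sigma$ (Lemma~2.6 is tailored to accommodate exactly this).
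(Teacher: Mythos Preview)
Your reduction of case~(ii) to case~(i) via a finite cover is essentially the paper's. For case~(i), however, your strategy diverges from the paper's in a way that leaves a genuine gap.

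The paper does \emph{not} impose a Neumann condition on $\Gamma$, and it works on only \emph{one} side. On $\Omega=\Omega_1$ (or $\Omega_2$, chosen so that $\int_\Sigma h^\Sigma(\nabla^\Sigma f,\nabla^\Sigma f)\geq 0$) it solves the \emph{pure Dirichlet} problem with data $z_1=u$ on $\Sigma$ and $z_2$ on $\Gamma$, where $z_2$ is the $\Gamma$-\emph{harmonic} extension of $u|_{\partial\Sigma}$. That single choice is what makes the argument close: since $\Delta^\Gamma(f|_\Gamma)=0$, the term $-\Delta^\Gamma f\cdot\partial_n f$ in Reilly's formula vanishes, and integration by parts on $\Gamma$ gives $\int_{\partial\Gamma}f\,\partial f/\partial\nu_\Gamma=\int_\Gamma|\nabla^\Gamma f|^2$. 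The orthogonality $\nu_\Sigma=-n_\Gamma$, $n_\Sigma=-\nu_\Gamma$ along $\partial\Sigma=\partial\Gamma$ then shows the two corner integrals $\int_{\partial\Sigma}(\partial f/\partial\nu_\Sigma)(\partial f/\partial n_\Sigma)$ and $\int_{\partial\Gamma}(\partial f/\partial\nu_\Gamma)(\partial f/\partial n_\Gamma)$ coincide, producing the factor~$2$; together with $\partial f/\partial\nu_\Sigma=\sigma_1 z$ one obtains $(k-2\sigma_1)\int_\Gamma|\nabla^\Gamma f|^2\leq 0$ directly. No Cauchy--Schwarz, no Escobar-type balancing, and no sum over both components is needed.

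Your Neumann choice on $\Gamma_k$ breaks exactly this mechanism and creates two problems. First, the data are first-order incompatible at the corner: the Dirichlet condition forces $\partial\hat u_k/\partial\nu^{\mathrm{in}}=-\sigma_1 u$ along $\partial\Sigma$ from the $\Sigma$ side, while the Neumann condition forces it to be $0$ from the $\Gamma_k$ side. After even reflection this is a Dirichlet problem with $|x|$-type boundary data, so $D^2\hat u_k\sim r^{-1}$ near $\partial\Sigma$ and $\int_{\Omega_k}|D^2\hat u_k|^2$ diverges logarithmically; the hypotheses of Lemma~2.6 fail and Reilly's identity is not available. Second, even at the formal level your endgame does not close: with Neumann data the restriction $\hat u_k|_{\Gamma_k}$ is \emph{not} $\Gamma_k$-harmonic, so your extra term $\sum_k\int_{\Gamma_k}\hat u_k\,\Delta^{\Gamma_k}\hat u_k=-\sum_k\int_{\Gamma_k}\hat u_k\,D^2\hat u_k(\nu,\nu)$ has no sign or useful bound. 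From $\sigma_1(D+E)\geq kD$ with $D=\int_{\partial M}|\nabla^{\partial M}\hat u|^2$ and $E$ the uncontrolled term above, there is no evident inequality yielding $\sigma_1\geq k/2$; the Steklov identity on $\Sigma$ does not couple to these $\Gamma$-quantities, and the Escobar mechanism you invoke relies on $\partial M$ being closed (so that $\int f\Delta^{\partial M}f=-\int|\nabla f|^2$), which fails for $\Gamma_k$. If you replace Neumann by the paper's Dirichlet-harmonic-extension on each $\Gamma_k$, summing over $k$ just reproduces the paper's inequality twice; the cancellation-by-summing idea then becomes redundant, since the paper disposes of $h^\Sigma$ by choosing the favorable side.
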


\begin{proof}
We first assume that $\Sigma$ is orientable. By Corollary 2.5 and Corollary 2.10, $\Sigma$ is connected and $\Sigma$ divides $M$ into two connected components $\Omega_1$ and $\Omega_2$. Take $\Omega=\Omega_1$. Let $\partial \Omega = \Sigma \cup \Gamma$ where $\Gamma \subset \partial M$. Thus, $\partial \Sigma=\partial \Gamma$. Note that $\Gamma$ is not necessarily connected, but each component of $\Gamma$ must intersect $\Sigma$ along some component of $\partial \Sigma$. Otherwise, $\partial M$ would have more than one component, which would contradict Proposition 2.8.

Let $z \in C^\infty( \partial \Sigma)$ be a first eigenfunction of the Dirichlet-to-Neumann map on $\Sigma$; i.e., there exists $z_1 \in C^\infty(\Sigma)$ such that 
\begin{equation}
\begin{cases}
\Delta^\Sigma z_1 =0  & \text{ on } \Sigma, \\
z_1=z & \text{ along } \partial \Sigma, \\
\frac{\partial z_1}{\partial \nu_\Sigma} = \sigma_1 z  &  \text{ along } \partial \Sigma,
\end{cases}
\end{equation}
where $\nu_\Sigma$ is the outward conormal vector of $\partial \Sigma$ with respect to $\Sigma$, and $\sigma_1=\sigma_1(\Sigma)$. Recall that $\partial \Gamma=\partial \Sigma$. Let $z_2 \in C^\infty(\Gamma)$ be the harmonic extension of $z \in C^\infty(\partial \Gamma)$ to $\Gamma$:
\begin{equation*}
\begin{cases}
\Delta^{\Gamma} z_2=0 & \text{ on } \Gamma, \\
z_2=z & \text{ along } \partial \Gamma.
\end{cases}
\end{equation*}
Next, we consider the Dirichlet boundary value problem on the compact $n$-manifold $\Omega$ with piecewise smooth boundary $\partial \Omega=\Sigma \cup \Gamma$:
\begin{equation}
\begin{cases}
\Delta^\Omega f = 0 & \text{ on } \Omega, \\
f=z_1 & \text{ along } \Sigma, \\
f=z_2 & \text{ along } \Gamma .
\end{cases}
\end{equation}
Note that the Dirichlet boundary data is continuous. Standard results on elliptic boundary problems (\cite{Agmon-Douglis-Nirenberg59}, \cite{Azzam80}, \cite{Azzam81}) imply that a classical solution for (3.2) exists and $f \in C^{1,\alpha}(\Omega) \cap C^\infty(\Omega \setminus \partial \Sigma)$, for every $\alpha \in (0,1)$, together with uniform $C^3$ estimates away from the singular set $\partial \Sigma$. Applying Reilly's formula (2.3), we have
\begin{equation*}
0 \geq \int_{\Sigma} \left( \langle \nabla^\Sigma f, \nabla^\Sigma \frac{\partial f}{\partial n_\Sigma} \rangle +h^{\Sigma}(\nabla^\Sigma f,\nabla^\Sigma f) \right) + \int_{\Gamma} \left( \langle \nabla^\Gamma f, \nabla^\Gamma \frac{\partial f}{\partial n_\Gamma} \rangle + k \|\nabla^\Gamma f\|^2 \right).
\end{equation*}
where $n_\Sigma$ and $n_\Gamma$ are the inward unit normals of $\Sigma$ and $\Gamma$ respectively, with respect to $\Omega$. Without loss of generality, we can assume that the integral $\int_{\Sigma} h^{\Sigma}(\nabla^\Sigma f,\nabla^\Sigma f) \geq 0$. Otherwise, we choose $\Omega=\Omega_2$ instead. Using the fact that $\Delta^\Sigma(f|_\Sigma)=\Delta^\Sigma z_1=0$ and $\Delta^\Gamma (f|_\Gamma)=\Delta^\Gamma z_2=0$, integrating by parts gives
\begin{equation}
0 \geq \int_{\partial \Sigma} \frac{\partial f}{\partial \nu_\Sigma} \frac{\partial f}{\partial n_\Sigma} + \int_{\partial \Gamma} \frac{\partial f}{\partial \nu_\Gamma} \frac{\partial f}{\partial n_\Gamma} + k \int_{\Gamma} \|\nabla^\Gamma f\|^2,
\end{equation}
where $\nu_\Sigma$ and $\nu_\Gamma$ are the outward conormal vectors of $\partial \Sigma=\partial \Gamma$ with respect to $\Sigma$ and $\Gamma$ respectively. Since $\Sigma$ meets $\Gamma$ orthogonally along $\partial \Sigma=\partial \Gamma$, we have $\nu_\Sigma=-n_\Gamma$ and $n_\Sigma=-\nu_\Gamma$ along the common boundary $\partial \Sigma$. Since $f \in C^{1,\alpha}(\Omega)$, the gradient $D f$ is continuous on $\Omega$ up to the singular set $\partial \Sigma=\partial \Gamma$. Therefore,
\begin{equation*}
\int_{\partial \Sigma} \frac{\partial f}{\partial \nu_\Sigma} \frac{\partial f}{\partial n_\Sigma}=-\int_{\partial \Sigma} \frac{\partial f}{\partial \nu_\Sigma} \frac{\partial f}{\partial \nu_\Gamma}=\int_{\partial \Gamma} \frac{\partial f}{\partial \nu_\Gamma} \frac{\partial f}{\partial n_\Gamma}.
\end{equation*}
Putting this back into (3.3) and using the boundary condition in (3.1), we get 
\begin{equation*}
0 \geq -2 \sigma_1 \int_{\partial \Gamma} f \frac{\partial f}{\partial \nu_\Gamma} + k \int_{\Gamma} \|\nabla^\Gamma f\|^2.
\end{equation*}
Since $\Delta^\Gamma (f|_\Gamma)=0$, another integration by parts on $\Gamma$ implies that $\int_{\partial \Gamma} f \frac{\partial f}{\partial \nu_\Gamma} =\int_{\Gamma} \|\nabla^\Gamma f\|^2$. As $f$ is non-constant on $\Gamma$ (since $z$ is non-constant on $\partial \Gamma$), we get $\sigma_1 \geq k/2$. This proves the theorem when $\Sigma$ is orientable.

Now, suppose $\Sigma$ is not orientable but $\pi_1(M)$ is finite. Let $\tilde{M}$ be the universal cover of $M$. Then $\tilde{M}$ satisfies the same curvature assumptions as $M$. Since $\pi_1(M)$ is finite, $\tilde{M}$ is compact and $\pi: \tilde{M} \to M$ is a finite covering. Let $\tilde{\Sigma}$ be the lifting of $\Sigma$; i.e., $\tilde{\Sigma}=\pi^{-1}(\Sigma)$. Since $\tilde{M}$ is simply connected and $\tilde{\Sigma}$ is properly embedded, both $\tilde{M}$ and $\tilde{\Sigma}$ are orientable. By the result above, $\sigma_1(\tilde{\Sigma}) \geq k/2$. But the pullback by $\pi$ of the first Steklov eigenfunction of $\Sigma$ into $\tilde{\Sigma}$ is again an eigenfunction of $\tilde{\Sigma}$. Therefore, $\sigma_1(\Sigma) \geq \sigma_1(\tilde{\Sigma}) \geq k/2$. The proof of Theorem 3.1 is completed.
\end{proof}

Since $B^n$ is simply connected, we have the following corollary. 

\begin{corollary}
Let $\Sigma$ be a compact properly embedded minimal hypersurface in $B^n$, the Euclidean unit ball, with free boundary on $\partial B^n$. Then $\sigma_1(\Sigma) \geq 1/2$.  
\end{corollary}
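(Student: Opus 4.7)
The plan is to obtain this as a direct application of Theorem 3.1, so the work reduces to verifying that the Euclidean unit ball $B^n$ satisfies each hypothesis of that theorem and to identifying the admissible convexity constant $k$.

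First I would note that $B^n$ is a compact orientable Riemannian manifold with nonempty boundary $\partial B^n = S^{n-1}$, and that the Euclidean metric is flat so $\mathrm{Ric}^{B^n} \equiv 0$, which is trivially nonnegative. Next I would compute the second fundamental form of $S^{n-1} \subset \mathbb{R}^n$ with respect to the inward unit normal. At a point $p\in S^{n-1}$ the inward unit normal is $-p$, and for any unit $u \in T_p S^{n-1}$ one has $h^{\partial B^n}(u,u) = \langle D_u u, -p\rangle = 1$, since the unit sphere has all principal curvatures equal to $1$ with respect to the inward normal. Thus the strict convexity assumption of Theorem 3.1 holds with constant $k = 1$.

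Finally I would invoke hypothesis (ii) of Theorem 3.1: since $B^n$ is contractible, $\pi_1(B^n)$ is trivial and in particular finite. Given any compact properly embedded minimal hypersurface $\Sigma \subset B^n$ with free boundary on $\partial B^n$, Theorem 3.1 then yields $\sigma_1(\Sigma) \geq k/2 = 1/2$, which is the claim. There is no genuine obstacle here; the only computation is the elementary one for the shape operator of $S^{n-1}$, and orientability of $\Sigma$ is not required because we rely on the simple connectedness of the ambient ball through hypothesis (ii).
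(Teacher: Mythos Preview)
Your proposal is correct and matches the paper's approach exactly: the paper simply remarks that $B^n$ is simply connected (so $\pi_1(B^n)$ is finite) and applies Theorem 3.1, implicitly using that the unit sphere has $h^{\partial B^n}\equiv 1$ with respect to the inward normal. Your write-up just makes these verifications explicit.
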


It is known (\cite{Fraser-Schoen11}) that for a minimal submanifold properly immersed in the unit ball in $\mathbb{R}^n$ with free boundary on the unit sphere, the coordinate functions are Steklov eigenfunctions with eigenvalue $1$. It is natural to ask if this is the first Steklov eigenvalue when the minimal submanifold is properly embedded and has codimension one.

\begin{conjecture}
Let $\Sigma$ be a compact properly embedded minimal hypersurface in $B^n$, the Euclidean unit ball, with free boundary on $\partial B^n$. Then $\sigma_1(\Sigma) =1$.  
\end{conjecture}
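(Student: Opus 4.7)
The plan begins by observing that the upper bound $\sigma_1(\Sigma) \leq 1$ is automatic: the coordinate functions $x_1|_\Sigma, \ldots, x_n|_\Sigma$ are $\Sigma$-harmonic by minimality of $\Sigma$, satisfy the Steklov boundary condition $\partial x_i/\partial \nu_\Sigma = x_i$ (because along $\partial \Sigma$ the outward conormal $\nu_\Sigma$ coincides with the position vector on $\partial B^n$), and are $L^2(\partial \Sigma)$-orthogonal to constants (since $\int_{\partial \Sigma} x_i \, dA = \int_\Sigma \Delta^\Sigma x_i \, dV = 0$). So the remaining task is to prove the matching lower bound $\sigma_1(\Sigma) \geq 1$, which would improve Corollary 3.2 by a factor of two.

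The first strategy I would pursue is to sharpen the Reilly's formula argument of Theorem 3.1 in the special case $M = B^n$, exploiting that $\text{Ric}^{B^n} \equiv 0$ and that $\partial B^n$ is umbilic with $h^{\partial B^n} = g$ (so $k=1$). In that proof, applied to one component $\Omega$ of $B^n \setminus \Sigma$ and a harmonic extension $f$ of the eigenfunction, two non-negative quantities were discarded: the bulk Hessian term $\int_\Omega \|D^2 f\|^2$, and the umbilic mean-curvature contribution $(n-1) \int_\Gamma (\partial f/\partial n_\Gamma)^2$ on $\Gamma \subset \partial B^n$. I would retain both and, in parallel, experiment with cleverer boundary data for $f$ on $\Gamma$, for instance the homogeneous Neumann condition $\partial f/\partial n_\Gamma = 0$, or the mixed condition $\partial f/\partial n_\Gamma = f$ which mirrors the Steklov problem on $\Sigma$. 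The hope is that these retained positive terms combine to supply the missing factor of two in the final inequality $\sigma_1 \geq k/2$.

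A complementary angle uses the fact that the coordinate functions furnish an $n$-dimensional Steklov eigenspace at eigenvalue $1$. Supposing $\sigma_1 < 1$ for contradiction, orthogonality of Steklov eigenfunctions at distinct eigenvalues forces any first eigenfunction $u$ to satisfy $\int_{\partial \Sigma} u = 0$ and $\int_{\partial \Sigma} u \, x_i = 0$ for $i = 1, \ldots, n$. Combined with the conformal automorphism group of $B^n$ (which acts transitively on interior points) via a Hersch-type renormalization, one may be able to produce a convenient representative pair $(u, \Sigma)$ satisfying additional moment conditions, from which the contradiction can be extracted by plugging $u$ (or a product $u \cdot x_i$) into Reilly's formula and exploiting these extra vanishings.

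The main obstacle in either approach is the indefinite boundary term $\int_\Sigma h^\Sigma(\nabla^\Sigma u, \nabla^\Sigma u)$ appearing in Reilly's formula on the minimal face. In Theorem 3.1 this term was controlled by the asymmetric device of choosing whichever of the two components $\Omega_1, \Omega_2$ of $M \setminus \Sigma$ gave the favorable sign, a choice which costs precisely the factor of two the conjecture asks us to restore. Bypassing it will likely require a genuinely new tool, perhaps a maximum principle applied to a quantity built from $u$ and the second fundamental form of $\Sigma$ in the spirit of Brendle's resolution of the Lawson conjecture mentioned in the introduction; this is presumably why the statement is left only as a conjecture here.
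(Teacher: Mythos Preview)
The statement is a \emph{conjecture} in the paper, not a theorem; the paper offers no proof and explicitly poses it as an open question following the weaker bound $\sigma_1(\Sigma) \geq 1/2$ of Corollary~3.2. There is therefore no paper proof to compare against.

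Your proposal is accordingly not a proof but a discussion of strategies, and you are explicit about this. The upper bound $\sigma_1(\Sigma) \leq 1$ you establish is correct and matches exactly what the paper records just before the conjecture: the coordinate functions are Steklov eigenfunctions with eigenvalue $1$, orthogonal to constants on $\partial\Sigma$. For the lower bound, your diagnosis of where the factor of two is lost in the Reilly argument of Theorem~3.1 is accurate---it is precisely the step where one discards the component $\Omega_k$ with the unfavorable sign of $\int_\Sigma h^\Sigma(\nabla^\Sigma f,\nabla^\Sigma f)$---and your suggestion that circumventing it likely requires a new ingredient (possibly in the spirit of Brendle's sharp maximum principle) is a reasonable reading of why the paper leaves the statement open.

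In short: there is no gap to flag because you do not claim a proof, and your assessment of the state of the problem aligns with the paper's own. The exploratory ideas you list (retaining the Hessian and mean-curvature terms in Reilly's formula, alternative boundary conditions on $\Gamma$, Hersch-type renormalization) are sensible first attempts, but none of them as stated closes the gap between $k/2$ and $k$, and you correctly do not pretend otherwise.
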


From now on, we will assume that $n=3$. In \cite{Fraser-Schoen11}, Fraser and Schoen proved that if $\Sigma$ is a compact orientable surface of genus $g$ with $\gamma$ boundary components of total length $L(\partial \Sigma)$, then $\sigma_1(\Sigma) L(\partial \Sigma) \leq 2\pi (g+\gamma)$. Combining this with a bound of Kokarev \cite{Kokarev11} and Theorem 3.1, we get the following estimate on the boundary length of a minimal surface with free boundary in terms of its topology.

\begin{proposition}
Let $M$ and $\Sigma$ be the same as in Theorem 3.1. Assume that dim $\Sigma=2$. Then, 
\begin{equation*}
L(\partial \Sigma) \leq \min \left\{\frac{4\pi}{k}(g+\gamma), \frac{16\pi}{k} \left[ \frac{g+3}{2} \right] \right\}.
\end{equation*}
\end{proposition}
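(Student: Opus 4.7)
The plan is to combine the lower bound on the first Steklov eigenvalue provided by Theorem 3.1 with the Fraser--Schoen upper bound $\sigma_1(\Sigma) L(\partial \Sigma) \leq 2\pi(g+\gamma)$ from \cite{Fraser-Schoen11}, which is stated for compact orientable surfaces of genus $g$ with $\gamma$ boundary components. This is essentially a one-line derivation once the hypotheses of both results are checked, so the only real work is verifying that Theorem 3.1 applies to $\Sigma$.

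First I would note that since $n=3$, $\partial M$ is strictly convex and $M$ has nonnegative Ricci curvature, Theorem 2.11(b) gives that $M$ is diffeomorphic to $B^3$. In particular $M$ is simply connected, so both hypotheses of Theorem 3.1 are automatically satisfied: $\pi_1(M)$ is trivial (so (ii) holds), and as $M$ is simply connected any properly embedded two-sided hypersurface in $M$ is orientable (so (i) holds as well). Hence $\sigma_1(\Sigma) \geq k/2$.

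Next I would simply invoke the Fraser--Schoen inequality for the orientable surface $\Sigma$ of genus $g$ with $\gamma$ boundary components, namely $\sigma_1(\Sigma) L(\partial \Sigma) \leq 2\pi(g+\gamma)$, and combine:
\begin{equation*}
\frac{k}{2} L(\partial \Sigma) \leq \sigma_1(\Sigma) L(\partial \Sigma) \leq 2\pi(g+\gamma),
\end{equation*}
which after rearranging yields $L(\partial \Sigma) \leq \frac{4\pi}{k}(g+\gamma)$ as desired. There is no serious obstacle here; the proposition is a direct corollary of Theorem 3.1 and the cited result of Fraser and Schoen. The only point requiring care is the orientability of $\Sigma$, and this is supplied for free by Theorem 2.11(b).
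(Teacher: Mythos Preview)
Your proposal is correct and matches the paper's approach exactly: the paper presents Proposition 3.4 as an immediate combination of the Steklov lower bound from Theorem 3.1 with the Fraser--Schoen inequality $\sigma_1(\Sigma)L(\partial\Sigma)\leq 2\pi(g+\gamma)$, and handles orientability of $\Sigma$ via Theorem 2.11(b) just as you do (see Remark 3.5).
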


\begin{remark}
By Theorem 2.11, we know that $M^3$ is diffeomorphic to the unit ball $B^3$, which is simply connected, and hence any properly embedded surface in $M$ is automatically orientable.
\end{remark}

\begin{corollary}
Let $\Sigma$ be a compact properly embedded minimal hypersurface in $B^3$, the Euclidean unit ball, with free boundary on $\partial B^3$. Then $L(\partial \Sigma) \leq 4\pi(g+\gamma)$.
\end{corollary}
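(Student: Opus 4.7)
The plan is to deduce Corollary 3.6 as a direct specialization of Proposition 3.5 to the Euclidean unit ball, so the bulk of the work is verifying that $(B^3,g_{\mathrm{eucl}})$ satisfies the hypotheses of that proposition with the explicit constant $k=1$.

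First I would check the curvature hypotheses. The Euclidean metric on $B^3$ is flat, hence $\mathrm{Ric}^M \equiv 0$, which satisfies the nonnegative Ricci curvature assumption. For the boundary, $\partial B^3 = S^2$ is a round sphere of radius one, and with respect to the inward unit normal $n = -x$ (where $x$ is the position vector), the second fundamental form satisfies $h^{\partial M}(u,u) = \langle D_u u, -x\rangle = 1$ for any unit vector $u \in T\partial M$. Thus $\partial B^3$ is strictly convex with the optimal constant $k=1$.

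Next, I would check the orientability hypothesis needed to invoke Theorem 3.1 (which underlies Proposition 3.5). Since $B^3$ is simply connected, $\pi_1(M)$ is trivial, in particular finite, so condition (ii) of Theorem 3.1 is automatic; equivalently, as noted in Remark 3.5, any properly embedded surface in the simply connected ball $B^3$ is automatically orientable, so condition (i) holds as well. Either way, the Steklov estimate $\sigma_1(\Sigma) \geq k/2 = 1/2$ applies to $\Sigma$.

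Finally, I would substitute $k=1$ into the conclusion of Proposition 3.5, which was itself obtained by combining the Steklov estimate with the Fraser--Schoen bound $\sigma_1(\Sigma) L(\partial \Sigma) \leq 2\pi(g+\gamma)$ from \cite{Fraser-Schoen11}. This yields
\begin{equation*}
L(\partial \Sigma) \leq \frac{4\pi}{k}(g+\gamma) = 4\pi(g+\gamma),
\end{equation*}
which is the desired estimate. There is essentially no obstacle here beyond the bookkeeping of constants: the corollary is a direct instantiation of the general result already proved, and the only nontrivial point is the identification of the optimal constant $k=1$ coming from the fact that $S^2$ has principal curvatures equal to one.
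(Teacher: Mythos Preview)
Your proof is correct and is exactly the intended argument: the corollary is an immediate specialization of Proposition~3.4 (you write ``Proposition~3.5,'' but in the paper's numbering the length bound is Proposition~3.4) to $M=B^3$, where the flat metric gives $\mathrm{Ric}^M\equiv 0$ and the unit sphere has $h^{\partial M}(u,u)=1$, so $k=1$. The paper does not spell out a separate proof for this corollary precisely because it follows by this direct substitution.
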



\section{Removable Singularity Theorem}

In this section, we prove a removable singularity result at the free boundary for properly embedded minimal surfaces with free boundary in a compact Riemannian 3-manifold with boundary. 

\begin{theorem}
Let $M^3$ be a Riemannian 3-manifold with boundary and let $Q$ be a point on $\partial M$. Suppose $\Sigma \subset M$ is a (possibly non-orientable) minimal surface with smooth boundary and finite Euler characteristic which is properly embedded in $M \setminus \{ Q\}$. Assume that $\Sigma$ meets $\partial M$ orthogonally along $\partial \Sigma$. If $Q$ lies in the closure of $\partial \Sigma$ as a point set, then $\Sigma \cup \{ Q\}$ is a smooth properly embedded minimal surface in $M$.
\end{theorem}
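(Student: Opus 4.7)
The plan is to reduce Theorem 4.1 to the interior removable singularity theorem of Choi and Schoen \cite{Choi-Schoen85} by a local reflection/doubling construction at $Q$. Fix a small neighborhood $U$ of $Q$ in $M$ and introduce Fermi coordinates $(x_1,x_2,x_3)$ with $\partial M \cap U = \{x_3 = 0\}$, in which the metric takes the form $g = dx_3^2 + g_{ij}(x_1,x_2,x_3)\,dx^i\,dx^j$. Reflect via $\sigma(x_1,x_2,x_3) := (x_1,x_2,-x_3)$ and form the doubled domain $\hat{U} = U \cup \sigma(U)$ with metric $\hat{g}$ equal to $g$ on $\{x_3 \geq 0\}$ and $\sigma^* g$ on $\{x_3 \leq 0\}$. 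The metric $\hat{g}$ is Lipschitz across $\{x_3 = 0\}$ and smooth on either side, and $\sigma$ becomes an isometry of $(\hat{U}, \hat{g})$. The orthogonality of $\Sigma$ along $\partial \Sigma$ forces its tangent planes at boundary points to contain $\partial_{x_3}$, so the reflected surface $\hat{\Sigma} := \Sigma \cup \sigma(\Sigma)$ is $C^{1,\alpha}$-embedded in $\hat{U}$, minimal in $(\hat{U}, \hat{g})$ (as a stationary varifold), and properly embedded in $\hat{U} \setminus \{Q\}$.

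By the standard doubling formula, $\chi(\hat{\Sigma}) = 2\chi(\Sigma)$ is finite. The next step is to bound the area density of $\hat{\Sigma}$ at $Q$: the monotonicity formula for stationary varifolds (which remains valid for Lipschitz metrics, with a controlled error term) combined with the isoperimetric inequality of Lemma 2.2 yields $\limsup_{r \to 0^+} |\hat{\Sigma} \cap B_r(Q)|/r^2 < \infty$. Applying Gauss--Bonnet to $\hat{\Sigma} \cap (B_R(Q) \setminus B_r(Q))$ and using the Gauss equation $K_{\hat{\Sigma}} = K^{\hat{M}}(T\hat{\Sigma}) - \tfrac{1}{2} |A|^2$, the topological bound converts into an $L^2$-bound on the second fundamental form $|A_{\hat{\Sigma}}|$ in a neighborhood of $Q$. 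An $\varepsilon$-regularity argument then upgrades this to a pointwise $C^0$-bound on $|A_{\hat{\Sigma}}|$ on annuli $\{r_0 \leq |x| \leq R\}$ bounded away from $Q$.

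With curvature bounds on annuli and finite topology in hand, the end of $\hat{\Sigma}$ at $Q$ consists of finitely many topologically annular ends (by embeddedness and finite Euler characteristic), each conformally equivalent to a punctured disk by Huber's theorem; a Weierstrass-type conformal parametrization then shows each extends smoothly across $Q$. The density at $Q$ equals one (by embeddedness of $\Sigma$ together with the monotonicity), so $\hat{\Sigma} \cup \{Q\}$ is a smoothly embedded minimal surface. By the reflection symmetry, the extension is $\sigma$-invariant, so it restricts to the desired smooth extension $\Sigma \cup \{Q\}$ in $M$, with orthogonality along $\partial M$ preserved.

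The principal obstacle is that $\hat{g}$ is only Lipschitz across $\partial M$, so Choi--Schoen's statement does not apply directly in the doubled manifold. This is handled by working with the minimal surface equation separately on each smooth half of $\hat{U}$, deriving the intrinsic curvature bounds on $\hat{\Sigma}$ (whose intrinsic geometry is well-behaved even though the ambient metric is only Lipschitz), and using the $C^1$ matching across $\partial M$ to glue. A secondary difficulty is ruling out multiple sheets of $\Sigma$ converging to $Q$: finite Euler characteristic together with embeddedness of $\Sigma$ is what bounds the number of ends and ultimately forces density one, reducing the boundary removable singularity problem to a single smooth interior removable singularity in the double.
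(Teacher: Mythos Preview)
Your reflection strategy is a natural idea and a genuinely different route from the paper, but as written it has real gaps. The most concrete one is your appeal to Lemma~2.2: Theorem~4.1 carries no curvature hypotheses on $M$ and no convexity hypotheses on $\partial M$, so Lemma~2.2 (which needs nonnegative Ricci and strict mean convexity) is simply unavailable here. Even if it were, Lemma~2.2 is a global inequality $\text{Vol}(\Sigma)\le c\,\text{Vol}(\partial\Sigma)$ and does not yield local area density control at a point. The subsequent chain ``area density $\Rightarrow$ Gauss--Bonnet on annuli $\Rightarrow$ $\int|A|^2<\infty$ $\Rightarrow$ $\varepsilon$-regularity'' is also not how the Choi--Schoen argument actually proceeds; their proof, and the paper's, never passes through area density bounds. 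Finally, there is no ``Weierstrass-type conformal parametrization'' for minimal surfaces in a general Riemannian $3$-manifold, so that step is not a proof.

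The Lipschitz-metric obstacle you flag is the real issue with the doubling approach, and your proposal does not overcome it: saying you will work on each smooth half and glue by $C^1$ matching is a description of what one would like, not an argument that monotonicity, $\varepsilon$-regularity, or the smooth extension survive across the fold. The paper avoids all of this by working directly with the free boundary, never reflecting. It parametrizes $\Sigma$ by a conformal harmonic map $F:\Omega\to M$, uses finite Euler characteristic to conformally compactify $\Omega$, argues that the ends $\gamma_i$ mapping to $Q$ must be boundary points (not arcs, else $F$ would be constant), invokes free boundary harmonic map regularity (Gr\"uter, Jost) to extend $F$ smoothly, rules out boundary branch points via the Gulliver--Lesley expansion, and uses the free boundary maximum principle to force a single sheet. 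This is the free boundary analogue of Choi--Schoen done with the appropriate boundary regularity literature, and it works in the original smooth metric throughout.
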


\begin{proof}
Assume first that $\Sigma$ is orientable. Let $\Omega$ denote the Riemann surface (with boundary) determined by the induced metric on $\Sigma$, and let $F:\Omega \to M$ be a conformal harmonic embedding with $\Sigma=F(\Omega)$. Since the Euler characteristic of $\Sigma$ is finite, $\Omega$ is conformally equivalent to a compact Riemann surface (with boundary) with a finite number of disks and points removed. Therefore, there exist (open or closed) arcs or points $\gamma_1,\ldots,\gamma_k$ such that $\overline{\Omega}=\Omega \cup (\cup_{i=1}^k \gamma_i )$ is a compact Riemann surface $\overline{\Omega}$ with boundary. Since $Q$ lies in the closure of $\partial \Sigma$ as a point set, this implies that we can extend $F$ continuously to $\overline{\Omega}$. We claim that all $\gamma_i$ are points on the boundary. Note that for each $\gamma_i$, $F(\gamma_i)=\{Q\}$. Suppose $\gamma_i$ is an arc. Since $F$ is continuous up to $\gamma_i$ and is harmonic with a constant value along $\gamma_i$, by a result in \cite{Gruter97}, $F$ is $C^{1,\alpha}$ up to $\gamma_i$. Since $F$ is conformal, $dF=0$ along $\gamma_i$ and therefore we can extend $F$ past $\gamma_i$ to take the constant value $Q$ and this extension is still $C^1$. This results in a weakly harmonic map which is $C^1$, and therefore a classical harmonic map (\cite{Hildebrandt-Kaul-Widman77}). However, since $F$ is constant on an open set, $F$ must be identically constant on $\Omega$, which is a contradiction. Therefore, each $\gamma_i$ is a point. Moreover, we see that $\gamma_i$ is not an interior point, since $F$ is a proper  embedding and $Q$ lies in the closure of $\partial \Sigma$. Therefore, $F$ extends smoothly (\cite{Jost86b}) across $\gamma_i$ to a harmonic map from $\overline{\Omega}$ to $M$. If $\gamma_i$ were a boundary branch point of $F$, then by the asymptotic expansion near a branch point at the free boundary (Lemma 1 of \cite{Gulliver-Lesley73}), there would be a line of self-intersection emanating from $Q$, which contradicts that $\Sigma$ is embedded. Therefore, $F$ extends as a proper minimal immersion from $\Omega \cup \{ \gamma_1,\ldots,\gamma_k\}$. Since $\Sigma$ is properly embedded, the maximum principle for minimal surfaces with free boundary implies that $k=1$ (otherwise, there would be two minimal half-disks with free boundary that touch at one point at $Q$, which would violate the maximum principe). Hence, $F:\Omega \cup \{\gamma_1\} \to \Sigma \cup \{Q\}$ is a smooth properly embedded compact minimal surface in $M$ with smooth free boundary on $\partial M$.

Now suppose $\Sigma$ is not orientable. Let $\tilde{\Sigma}$ be the orientable double cover of $\Sigma$ and let $\Omega$ be the Riemann surface determined by $\tilde{\Sigma}$. The same argument as above gives a proper minimal immersion from $\Omega \cup \{\gamma_1,\ldots,\gamma_k\}$. Choose a sufficiently small $r$ such that $F^{-1}(B_r(Q) \cap \Sigma)$ is a disjoint union of open sets $D_1,\ldots, D_k$ with $\gamma_i \in D_i$. Since $\Sigma$ is properly embedded, by the maximum principle, we have $F(D_i)=F(D_j)$ for all $i,j$. Therefore, $\Sigma \cup \{Q\}$ is a smooth properly embedded compact minimal surface in $M$ with smooth free boundary on $\partial M$. This proves Theorem 4.1.

\end{proof}


\section{Curvature Estimates}

In this section, we extend the well-known ``small total curvature'' estimate of Choi and Schoen \cite{Choi-Schoen85} to the free boundary case.

\begin{theorem}
Let $M^3$ be a compact Riemannian 3-manifold with boundary. Then there exists $r>0$ small enough (depending only on $M$ and $\partial M$) such that the following holds: let $Q \in M$,  and suppose $\Sigma$ is a compact properly immersed minimal surface in $M$ with free boundary on $\partial M$ such that $Q \in \Sigma$. Then, there exists $\epsilon>0$ depending only on the geometry of $B_r(Q)$ in $M$ such that if 
\begin{equation*}
\int_{\Sigma \cap B_r(Q)} \| h^\Sigma\|^2 da \leq \epsilon,
\end{equation*}
then we have 
\begin{equation*}
\max_{0 \leq \sigma \leq r} \left( \sigma^2 \sup_{B_{r-\sigma}(Q)} \| h^\Sigma \|^2 \right) \leq C,
\end{equation*}
where $C$ is a constant depending only on the geometry of $B_r(Q)$ in $M$.
\end{theorem}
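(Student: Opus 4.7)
The plan is to prove Theorem 5.1 by a contradiction-and-rescaling argument, adapting the interior estimate of Choi--Schoen \cite{Choi-Schoen85} to the free boundary setting via a reflection across $\partial M$. First I would fix $r > 0$ small enough that on every geodesic ball $B_r(Q)$ the injectivity radius, the ambient sectional curvature, and the second fundamental form of $\partial M \cap B_r(Q)$ are uniformly controlled; such a uniform $r$ exists because $M$ is compact. Suppose for contradiction that no constant $C$ works. Then there exist a sequence of properly immersed minimal surfaces $\Sigma_i$ with free boundary, points $Q_i \in \Sigma_i$, and scalars $\sigma_i^\ast \in [0,r]$ realizing the maximum in the statement, such that
\begin{equation*}
\int_{\Sigma_i \cap B_r(Q_i)} \|h^{\Sigma_i}\|^2 \, da \to 0, \qquad (\sigma_i^\ast)^2 \sup_{B_{r-\sigma_i^\ast}(Q_i)} \|h^{\Sigma_i}\|^2 \to \infty.
\end{equation*}

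Next I would apply the Choi--Schoen point-picking trick: choose $p_i \in \Sigma_i \cap \overline{B_{r - \sigma_i^\ast}(Q_i)}$ realizing the interior supremum, set $\lambda_i = \|h^{\Sigma_i}\|(p_i)$, so $\sigma_i^\ast \lambda_i \to \infty$, and observe that since $B_{\sigma_i^\ast/2}(p_i) \subset B_{r - \sigma_i^\ast/2}(Q_i)$ and $\sigma \mapsto \sigma^2 \sup_{B_{r-\sigma}(Q_i)}\|h^{\Sigma_i}\|^2$ is maximized at $\sigma_i^\ast$,
\begin{equation*}
\sup_{B_{\sigma_i^\ast/2}(p_i)} \|h^{\Sigma_i}\|^2 \leq \frac{(\sigma_i^\ast)^2 \lambda_i^2}{(\sigma_i^\ast/2)^2} = 4 \lambda_i^2.
\end{equation*}
I then rescale the ambient metric by $\lambda_i^2$ on $B_{\sigma_i^\ast/2}(p_i)$. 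In the rescaled ball of radius $\sigma_i^\ast \lambda_i/2 \to \infty$, the ambient sectional curvature and the second fundamental form of $\partial M$ tend to zero, the rescaled surface $\tilde{\Sigma}_i$ satisfies $\|h^{\tilde\Sigma_i}\|^2 \leq 4$ with value $1$ at $p_i$, and its scale-invariant $L^2$ curvature is bounded by the original one, hence tends to zero.

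Two cases then arise. If $\mathrm{dist}(p_i,\partial M)\,\lambda_i \to \infty$, then by the standard Cheeger--Gromov compactness for immersions with uniform second fundamental form bound, a subsequence converges locally smoothly to a complete minimal immersion $\tilde \Sigma_\infty$ in $\mathbb{R}^3$ with $\|h\|(0)=1$ and $\int_{\tilde \Sigma_\infty}\|h\|^2=0$; the latter forces $\tilde\Sigma_\infty$ to be a plane, contradicting $\|h\|(0)=1$. If instead $\mathrm{dist}(p_i,\partial M)\,\lambda_i$ stays bounded, the rescaled ambient manifolds converge in $C^k$ to the closed half-space $\overline{\mathbb{R}^3_+}$, and the free boundary condition is preserved under the limit. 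Boundary regularity for minimal surfaces with partially free boundary (Gr\"uter--Jost \cite{Gruter-Jost86a}) together with the uniform curvature bound yields a smooth subsequential limit $\tilde\Sigma_\infty$ in $\overline{\mathbb{R}^3_+}$ meeting $\partial\mathbb{R}^3_+$ orthogonally. Reflecting across $\partial\mathbb{R}^3_+$ — which produces a smooth surface because of the orthogonal contact — gives a complete minimal surface in $\mathbb{R}^3$ with vanishing total curvature, hence a plane, again contradicting $\|h\|(0) = 1$.

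The main obstacle is the boundary blow-up case: one must produce enough uniform boundary regularity on the rescaled $\tilde\Sigma_i$ to pass to a smooth free-boundary limit across which reflection is legitimate. I expect this to follow from representing $\tilde\Sigma_i$ near the boundary as a small graph over its tangent plane at a nearby boundary point (possible because $\|h^{\tilde\Sigma_i}\| \leq 2$ and $\partial M$ becomes asymptotically flat after rescaling) combined with the uniform boundary $C^{k,\alpha}$ estimates of Gr\"uter--Jost for the minimal surface system with free boundary. Once this is in place, the two-case contradiction closes the proof.
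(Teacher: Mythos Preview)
Your argument is correct, but it follows a different route from the paper's. Both proofs begin with the same point-picking and rescaling, arriving at a rescaled surface with $\|\tilde h\|^2(Q_0)=1$ and $\sup\|\tilde h\|^2\leq 4$ on a large ball. From there the paper does \emph{not} pass to a limit: it works on a single rescaled surface, uses the monotonicity formula to bound the area of $\tilde B_{\epsilon^{1/3}}(Q_0)$ from below by $c\epsilon^{2/3}$, combines this with $\int_{\tilde B_{\epsilon^{1/3}}}\|\tilde h\|^2\leq\epsilon$ to force $\inf_{\tilde B_{\epsilon^{1/3}}}\|\tilde h\|^2\leq c^{-1}\epsilon^{1/3}$, and hence gets a lower bound $\|\,\|\tilde h\|^2\,\|_{C^{0,\alpha}}\gtrsim\epsilon^{-\alpha/3}$; this contradicts, for $\epsilon$ small, the uniform $C^{2,\alpha}$ Schauder bound on the graph (interior Schauder if $Q_0\notin\partial M$, boundary Schauder via the free boundary condition in Fermi coordinates if $Q_0\in\partial M$). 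Your approach instead extracts a blow-up limit, splitting into interior and boundary cases and using the reflection across $\partial\mathbb{R}^3_+$ to reduce the latter to the former. The paper's argument is quantitative and yields the explicit constant $C=4$ without any compactness extraction; your argument is more geometric and modular, and the reflection trick is a clean way to handle the boundary case, at the cost of invoking a compactness theorem and losing the explicit constant. Note that both proofs ultimately rest on the same boundary Schauder regularity --- you need it to pass to a smooth free-boundary limit, the paper uses it directly to bound the H\"older norm.
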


\begin{proof}
Choose $\sigma_0 \in (0,r]$ such that 
\begin{equation*}
\sigma_0^2 \sup_{B_{r-\sigma_0}(Q)} \|h^\Sigma\|^2 = \max_{0 \leq \sigma \leq r} \left( \sigma^2 \sup_{B_{r-\sigma}(Q)} \| h^\Sigma \|^2 \right).
\end{equation*}
Let $Q_0 \in B_{r-\sigma_0}(Q)$ be chosen so that 
\begin{equation*}
\|h^\Sigma\|^2(Q_0)=\sup_{B_{r-\sigma_0}(Q)} \|h^\Sigma\|^2.
\end{equation*}
Therefore, as $B_{\sigma_0/2}(Q_0) \subset B_{r-\sigma_0/2}(Q)$, by the choice of $\sigma_0$ and $Q_0$, we have 
\begin{equation*}
\sup_{B_{\sigma_0/2}(Q_0)} \|h^\Sigma\|^2 \leq \sup_{B_{r-\sigma_0/2}(Q)} \|h^\Sigma\|^2 \leq 4 \|h^\Sigma\|^2(Q_0).
\end{equation*}
We rescale the metric $ds^2$ on $M$ by setting $\tilde{ds}^2=\|h^\Sigma\|^2(Q_0) ds^2$. Then $\Sigma$ is still a minimal surface with respect to $\tilde{ds}^2$ with free boundary on $\partial M$. In the rescaled metric, we have 
\begin{equation}
\|\tilde{h}^\Sigma\|^2(Q_0)=1 \qquad \text{and} \qquad \sup_{\tilde{B}_{r_0}(Q_0)} \|\tilde{h}^\Sigma\|^2 \leq 4,
\end{equation}
where $r_0=\frac{1}{2} \sigma_0 \|h^\Sigma\|(Q_0)$. We claim that we can choose $\epsilon$ sufficiently small enough so that $r_0 \leq 1$. In this case, 
\begin{equation*}
\max_{0 \leq \sigma \leq r} \left( \sigma^2 \sup_{B_{r-\sigma}(Q)} \| h^\Sigma \|^2 \right)=\sigma_0^2 \sup_{B_{r-\sigma_0}(Q)} \|h^\Sigma\|^2  \leq 4.
\end{equation*}
Suppose that $r_0 \geq 1$, then $\rho_0=(\|h^\Sigma\|(Q_0))^{-1} \leq \sigma_0/2$. Therefore, 
\begin{equation*}
\int_{\Sigma \cap \tilde{B}_1(Q_0)} \|\tilde{h}^\Sigma\|^2 \tilde{da} = \int_{\Sigma \cap B_{\rho_0}(Q_0)} \| h^\Sigma \|^2 da \leq \int_{\Sigma \cap B_r(Q)} \| h^\Sigma \|^2 da \leq \epsilon.
\end{equation*}
If $0<\epsilon<1$, then 
\begin{equation}
\int_{\Sigma \cap \tilde{B}_{\epsilon^{1/3}}(Q_0)} \|\tilde{h}^\Sigma\|^2 \tilde{da}  \leq \int_{\Sigma \cap \tilde{B}_1(Q_0)} \|\tilde{h}^\Sigma\|^2 \tilde{da} \leq \epsilon.
\end{equation}
By the monotonicity formula for minimal surfaces with free boundary (\cite{Gruter-Jost86a}), when $r$ is sufficiently small, there is a constant $c>0$, depending on the geometry of $B_r(Q)$, such that the area with respect to $\tilde{ds}^2$ satisfies $A(\Sigma \cap \tilde{B}_{\epsilon^{1/3}}(Q_0)) \geq c \epsilon^{2/3}$. Thus, (5.2) implies that 
\begin{equation*}
\inf_{\Sigma \cap \tilde{B}_{\epsilon^{1/3}}(Q_0)} \| \tilde{h}^\Sigma\|^2 \leq c^{-1} \epsilon^{1/3}.
\end{equation*}
Together with (5.2), this implies that 
\begin{equation}
\| \| \tilde{h}^\Sigma\|^2 \|_{C^{0,\alpha}(\Sigma \cap \tilde{B}_{\epsilon^{1/3}}(Q_0))} \geq c \epsilon^{-\alpha /3}
\end{equation}
for any $\alpha \in (0,1)$. On the other hand, using (5.1), for each $P \in \Sigma \cap \tilde{B}_{\epsilon^{1/3}}(Q_0)$, the connected component of  $\Sigma \cap \tilde{B}_{\epsilon^{1/3}}(Q_0)$ containing $P$ is a graph over some open set of $T_P \Sigma$ of a function $u^P$ with uniformly bounded gradient and Hessian. Note that $\Sigma$ is minimal, hence $u$ satisfies a uniformly elliptic equation. If $P \notin \partial M$, we can apply the interior Schauder estimate (Corollary 6.3 of \cite{Gilbarg-Trudinger01}) to get a uniform $C^{2,\alpha}$ estimate for $u^P$. If $P \in \partial M$, using the free boundary condition, $u$ satisfies a homogeneous boundary condition in  the Fermi coordinates, hence the Schauder estimate for uniformly elliptic equations with homogeneous boundary conditions (\cite{Agmon-Douglis-Nirenberg59}) again implies a uniform $C^{2,\alpha}$ estimate for $u^P$. Therefore, in any case, we have 
\begin{equation*}
\| \| \tilde{h}^\Sigma\|^2 \|_{C^{0,\alpha}(\Sigma \cap \tilde{B}_{\epsilon^{1/3}}(Q_0))} \leq C
\end{equation*}
for some constant $C>0$ depending only on the geometry of $B_r(Q)$. This contradicts (5.3) above when $\epsilon>0$ is sufficiently small (depending on the geometry of $B_r(Q)$). As a result, when $\epsilon>0$ is chosen small enough, then $r_0 \leq 1$. So we are done.

\end{proof}


\section{The Smooth Compactness Theorem}

We prove our main compactness result in this section.

\begin{theorem}
Let $M^3$ be a compact $3$-dimensional Riemannian manifold with nonempty boundary $\partial M$. Suppose $M$ has nonnegative Ricci curvature and the boundary $\partial M$ is strictly convex with respect to the inward unit normal. Then the space of compact properly embedded minimal surfaces of fixed topological type in $M$ with free boundary on $\partial M$ is compact in the $C^k$ topology for any $k \geq 2$.
\end{theorem}

\begin{proof}
Note that by Theorem 2.11, $M^3$ is diffeomorphic to the unit ball $B^3$, hence is simply connected. Let $\Sigma$ be a compact properly embedded minimal surface with free boundary on $\partial M$. Then $\Sigma$ is orientable. Suppose $\Sigma$ has genus $g$ with $\gamma$ boundary components. From the Gauss equation and the minimality of $\Sigma$, for any $x \in \Sigma$, we have
\begin{equation*}
\frac{1}{2} \|h^\Sigma\|^2(x) =K^M(x) - K^\Sigma(x), 
\end{equation*}
where $K^M(x)$ and $K^\Sigma(x)$ are the sectional curvatures of the plane $T_x \Sigma$ with respect to $M$ and $\Sigma$ respectively. We can Integrate the equality above over $\Sigma$ and apply the Gauss-Bonnet theorem to obtain
\begin{equation*}
\frac{1}{2} \int_\Sigma \|h^\Sigma \|^2 = \int_\Sigma K^M + \int_{\partial \Sigma} k_g - 2\pi \chi (\Sigma),
\end{equation*}
where $k_g$ is the geodesic curvature of $\partial \Sigma$ with respect to $\Sigma$ and $\chi(\Sigma)$ is the Euler characteristic of $\Sigma$. Since $\Sigma$ meets $\partial M$ orthogonally along $\partial \Sigma$, $k_g$ is equal to $h^{\partial M}(u,u)$, where $u$ is the unit tangent vector for $\partial \Sigma$. Therefore, there exists a constant $C>0$ depending only on the upper bound of the sectional curvature of $M$ and the principal curvatures of $\partial M$ so that 
\begin{equation*}
\frac{1}{2} \int_\Sigma \|h^\Sigma \|^2  \leq C A(\Sigma) +C L(\partial \Sigma) -2\pi (2-2g-\gamma).
\end{equation*}
Using the isoperimetric inequality (2.2) and the apriori length bound in Proposition 3.4, we obtain
\begin{equation*}
 \int_\Sigma \|h^\Sigma \|^2  \leq C (g +\gamma),
\end{equation*}
where $C$ is a constant depending only on the geometry of the ambient manifold $M$.

Let $\{\Sigma_i\}$ be a sequence of compact properly embedded minimal surfaces of fixed topological type. Using the same covering argument on \cite[P.390-391]{Choi-Schoen85}, we can extract a subsequence of $\{\Sigma_i\}$, which we still call $\{\Sigma_i\}$, and a finite number of points $\{x_1,\ldots,x_\ell\}$ such that $\Sigma_i$ converges in the $C^\infty$ topology to some $\Sigma_0$ in $M \setminus \cup_{j=1}^\ell B_r(x_j)$ for any sufficiently small $r>0$. Here, $\Sigma_0$ is a properly embedded minimal surface (possibly with multiplicity) in $M \setminus \{x_1,\ldots,x_\ell\}$ with free boundary on $\partial M \setminus \{ x_1,\ldots,x_\ell\}$. Note that some $x_j$ may lie on $\partial M$. By the removable singularity theorem (Theorem 4.1), $\Sigma=\Sigma_0 \cup \{ x_1,\ldots, x_\ell\}$ is a compact properly embedded minimal surface with free boundary. The only thing left to prove is that $\Sigma$ has multiplicity $1$ as the limit of $\Sigma_i$. 

Recall that $\Sigma$ is orientable. As $\Sigma_i$ converges to $\Sigma_0$ in $M \setminus \cup_{j=1}^\ell B_{\epsilon^2}(x_j)$ for any sufficiently small $\epsilon$, there exists a large enough $i$ such that $\Sigma_i \setminus \cup_{j=1}^\ell B_{\epsilon^2} (x_j)$ is locally a union of graphs over $\Sigma_0$ by the curvature estimate in Theorem 5.1. We claim that there is only one sheet. Suppose not, then since $\Sigma$ is orientable,we can order the sheets $\Gamma_1,\ldots,\Gamma_k$, where $k \geq 2$. We claim that in this case, we would have $\sigma_1(\Sigma_i) \to 0$ as $\epsilon \to 0$, which would contradict the eigenvalue estimate in Theorem 3.1.

To prove that $\sigma_1(\Sigma_i) \to 0$ as $\epsilon \to 0$, we define a Lipschitz function on $\Sigma_i$ such that 
\begin{equation*}
\varphi= \left\{ \begin{array}{cl}
1 & \text{on } \Gamma_1 \setminus \cup_{j=1}^\ell B_\epsilon(x_j) \\
\frac{\log r_j - \log \epsilon^2}{\log \epsilon - \log \epsilon^2} & \text{on each } \Gamma_1 \cap (B_\epsilon(x_j) \setminus B_{\epsilon^2}(x_j)) \\
0 & \text{on } \Sigma_i \cap \cup_{j=1}^\ell B_{\epsilon^2}(x_j) \\
-\frac{\log r_j - \log \epsilon^2}{\log \epsilon - \log \epsilon^2} & \text{on each } (\Gamma_2 \cup \cdots \cup \Gamma_k) \cap (B_\epsilon(x_j) \setminus B_{\epsilon^2}(x_j)) \\
-1 & \text{on } (\Gamma_2 \cup \cdots \cup \Gamma_k) \setminus \cup_{j=1}^\ell B_\epsilon(x_j), \end{array} \right.
\end{equation*}
where $r_j=d^M(x_j,\cdot)$ is the distance function in $M$ from $x_j$. After possibly subtracting a constant, we can assume that $\int_{\partial \Sigma_i} \varphi=0$. Using the coarea formula and the monotonicity formula for minimal surfaces with free boundary (\cite{Gruter-Jost86a}), the same calculation as on \cite[P.392]{Choi-Schoen85} implies that 
\begin{equation*}
\int_{\Sigma_i} \| \nabla^{\Sigma_i} \varphi \|^2 \to 0 \qquad \text{as } \epsilon \to 0.
\end{equation*}
On the other hand, $\int_{\partial \Sigma_i} \varphi^2$ converges to a constant $C$ as $\epsilon \to 0$. Since each $\Gamma_\ell$ covers $\partial \Sigma_0$ once, we have $L(\partial \Gamma_\ell) \geq L(\partial \Sigma_0)-\eta$ for any arbitrarily small $\eta>0$ as $\epsilon \to 0$. Hence, $C\neq 0$. By the variational characterization of the first Steklov eigenvalue
\begin{equation*}
\sigma_1(\Sigma_i)=\inf_{\int_{\partial \Sigma_i} f =0, f \notequiv 0} \frac{\int_{\Sigma_i} \| \nabla^{\Sigma_i} f \|^2}{\int_{\partial \Sigma_i} f^2},
\end{equation*}
we see that $\sigma_1(\Sigma_i) \to 0$ as $\epsilon \to 0$. Using the Allard regularity theorem for minimal surfaces with free boundary (\cite{Gruter-Jost86}), we see that $\Sigma_i$ converges to $\Sigma$ in the $C^\infty$ topology even across the points $x_1,\ldots,x_\ell$. This completes the proof of Theorem 6.1.
\end{proof}

\begin{corollary}
The space of compact properly embedded smooth minimal surfaces of fixed topological type in the Euclidean unit ball $B^3$ with free boundary on $\partial B^3$ is compact in the $C^\infty$ topology.
\end{corollary}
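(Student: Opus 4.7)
The plan is to obtain Corollary 6.2 as a direct specialization of Theorem 6.1 to the round unit ball $B^3 \subset \mathbb{R}^3$, plus a routine upgrade from $C^k$ for every $k \geq 2$ to $C^\infty$ convergence.

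First, I would verify that $B^3$ satisfies the hypotheses of Theorem 6.1. It is a compact Riemannian $3$-manifold with nonempty boundary $\partial B^3 = S^2$. Since the ambient metric is the Euclidean one, the Ricci curvature of $B^3$ vanishes identically, so in particular $\mathrm{Ric}^{B^3} \geq 0$. The boundary $\partial B^3$ is the unit sphere, whose second fundamental form with respect to the inward unit normal is the identity on each tangent space; hence $h^{\partial B^3}(u,u) = 1$ for every unit vector $u$ tangent to $\partial B^3$, so $\partial B^3$ is strictly convex with convexity constant $k = 1$. All hypotheses of Theorem 6.1 are therefore in force.

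Applying Theorem 6.1 with $M = B^3$, the space of compact properly embedded minimal surfaces in $B^3$ with free boundary on $\partial B^3$ and of fixed topological type (fixed genus $g$ and number of boundary components $\gamma$) is compact in the $C^k$ topology for every $k \geq 2$. It remains only to enhance this to compactness in the $C^\infty$ topology. Given a sequence $\{\Sigma_i\}$ in this moduli space, I would extract by Theorem 6.1 a subsequence converging in $C^2$ to a limit $\Sigma$ in the moduli space, then a further subsequence converging in $C^3$, and so on; a standard diagonal extraction then yields a single subsequence that converges in $C^k$ for every $k \geq 2$, which is precisely convergence in the $C^\infty$ topology. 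The limit surface is the same $\Sigma$ produced at the $C^2$ stage, so it lies in the moduli space, finishing the proof.

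There is essentially no hard step here: the substantive content is already contained in Theorem 6.1, and the only thing to check is the elementary verification of curvature and convexity for the flat unit ball, together with the standard diagonal-subsequence argument upgrading $C^k$ compactness for all $k$ to $C^\infty$ compactness. If any subtlety arises, it would only be bookkeeping in the diagonal argument to ensure that all the extracted subsequences share a common limit $\Sigma$, which is immediate from uniqueness of limits in the weaker $C^2$ topology.
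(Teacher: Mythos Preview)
Your proposal is correct and matches the paper's approach: Corollary 6.2 is stated without proof in the paper as an immediate specialization of Theorem 6.1 to $M=B^3$, and the verification that the flat unit ball has nonnegative Ricci curvature and strictly convex boundary is exactly as you indicate. The diagonal upgrade from $C^k$-for-all-$k$ to $C^\infty$ is the standard bookkeeping step you describe (and in fact the proof of Theorem 6.1 already ends by invoking Allard regularity to obtain $C^\infty$ convergence, so even this step is implicit in the paper).
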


\bibliographystyle{amsplain}
\bibliography{references}

\end{document}